\newtheorem{theorem}{Theorem}[section]
\newtheorem{lemma}[theorem]{Lemma}
\newtheorem{proposition}[theorem]{Proposition}
\newtheorem{corollary}[theorem]{Corollary}
\begin{document}
	
	\numberwithin{equation}{section}
	
	\title{Ends of shrinking gradient $\rho$-Einstein solitons}
	
	\author{V. Borges\footnote{Universidade Federal do Pará, Faculdade de Matemática, 66075-110, Belém-PA, Brazil, valterborges@ufpa.br.} \quad H. A. Rosero-Garc\'ia \footnote{ Universidade de Bras\'{\i}lia,
			Department of Mathematics,
			70910-900, Bras\'{\i}lia-DF, Brazil, hector.garcia@unb.br. Partially supported by CAPES and CNPq.}
		\quad J. P. dos Santos \footnote{Universidade de Bras\'{\i}lia,
			Department of Mathematics,
			70910-900, Bras\'{\i}lia-DF, Brazil, joaopsantos@unb.br. Partially supported by CNPq 315614/2021-8.} 
	}
	
	\date{}
	
	\maketitle{}
	
	
	
	\begin{abstract}
		We prove that all ends of a gradient shrinking $\rho$-Einstein soliton are $\varphi$-non-parabolic, provided $\rho$ is nonnegative and the soliton has bounded and nonnegative scalar curvature, where the weight $\varphi$ is a negative multiple of the potential function. We also show these solitons are connected at infinity for $\rho \in\left[0,1/2(n-1)\right)$, $n\geq4$, and a suitable bound for the scalar curvature.
	\end{abstract}
	
	\vspace{0.2cm} 
	\noindent\emph{2020 Mathematics Subject Classification} : 
	35Q51, 
	53C20, 
	53C21, 
	53C25\\
	\emph{Keywords}: $\rho$-Einstein solitons, Ends, Parabolicity.

	\section{Introduction and Main Results}
	
	Given $\rho\in\mathbb{R}$, we say that a Riemannian manifold $(M^n,g)$ is a {\it gradient} $\rho$-{\it Einstein soliton} if there are $f\in C^{\infty}(M)$, called {\it potential function}, and $\lambda\in\mathbb{R}$ satisfying
	\begin{align}\label{genfundeq}
		Ric+\nabla\nabla f=\left(\rho R+\lambda\right)g.
	\end{align}
	Here, $\nabla\nabla f$ is the Hessian of $f$, $R$ is the scalar curvature, and $Ric$ is the Ricci tensor of $M$. The soliton is called {\it shrinking}, {\it steady} or {\it expanding}, provided $\lambda$ is positive, zero, or negative, respectively. In this case we use the notation $(M^n,g,f,\lambda)$. Notice that when $\rho=0$, these are the {\it gradient Ricci solitons}.
	
	
	Gradient $\rho$-Einstein solitons may generate self-similar solutions of the Ricci-Bourguignon flow \cite{catino1}, which is a family of geometric flows, studied in \cite{brgs2,bourg,catino}, for instance. A notable case occurs when $\rho=0$, once the flow corresponds to the well-known Ricci flow, introduced by Hamilton \cite{hamilton1}. If $\lambda>0$, self-similar solutions are also ancient solutions, in the sense they are defined for all negative times. Ancient solutions are crucial in modeling singular regions that may arise during the flow. Such families of metrics have plenty of special properties, as shown in \cite{brgs2,catino2,pdaska}. The connection with flows makes the investigation of the topological, geometrical, and analytical properties of these manifolds an important subject.
	
		This paper aims to study the topology at infinity of shrinking gradient $\rho$-Einstein solitons, i.e., the ends structure of such manifolds. Recall that an {\it end} with respect to a compact, smooth subset of $M$ is an unbounded connected component of its complement, and the quantity of such components is the number of ends with respect to this compact subset. If this number	has an upper bound, no matter how big the compact set is, we say $M$ has finitely many ends and call the smallest upper bound $n_{0}=n_{0}(M)$ the {\it number of ends} of $M$. We also say $M$ has $n_{0}$ ends. If $n_{0}=1$, $M$ is said to be {\it connected at infinity}.
		
		Techniques for counting the ends of Riemannian manifolds were developed by Li and Tam in \cite{pLi0} (see also \cite{pLi,endharm}). A variation of this technique has been used by Munteanu and Wang to investigate the topology at infinity of a smooth metric measure spaces, see \cite{MuSe}-\cite{MuWa5}. This theory has successfully been used to show that gradient Ricci solitons are connected at infinity under quite general assumptions. For instance, when $\lambda<0$ and $R\geq-(n-2)\lambda$ in \cite{MuWa2}, when $\lambda=0$ in \cite{MuWa1}, and when $\lambda>0$ and $R\leq\frac{2n}{3}\lambda$ in \cite{MuWa5}. We observe that if we remove the assumption on the scalar curvature, the last result is false, once $\mathbb{R}\times\mathbb{S}^{n-1}$ is a shrinking Ricci soliton with $2$ ends \cite{QW}. Another notable use of this theory can be found in \cite{MuWa4}, where the authors show that Kähler shrinking Ricci solitons are connected at infinity.
		
		In order to count the number of ends of complete gradient shrinking $\rho$-Einstein solitons following the techniques of \cite{MuWa4,MuWa5}, it is convenient to regard $M^n$ as smooth metric measure space $(M^n,g,e^{-\varphi}dV)$, whose weight is given by
		\begin{equation}
			\varphi=-af,
		\end{equation}
		where $a > 0$ is a fixed constant. In this setting, an end $E$ is called $\varphi$-{\it non-parabolic} if the $\varphi$-Laplacian, $\Delta_{\varphi}(\cdot)=\Delta(\cdot)-\left\langle\nabla(\cdot),\nabla\varphi\right\rangle$, has a positive Green's function on $E$ satisfying the Neumann boundary condition. Otherwise, $E$ is called $\varphi$-{\it parabolic}.
		
		An important result established in \cite{MuWa4} asserts that complete gradient Ricci solitons have no $\varphi$-parabolic ends. This is a key step in the proof of connectedness at infinity mentioned above. The first result of this paper extends this important analytical description to the ends of a gradient shrinking $\rho$-Einstein solitons, under suitable assumptions:
		
		\begin{theorem}\label{nphinpendthm} 
			Let $(M^n, g, f,\lambda)$, $n\geq3$, be a complete gradient shrinking $\rho$-Einstein soliton with $\rho\geq0$ and scalar curvature satisfying $0\leq R\leq K$, for some positive constant $K$. Then all ends of $M$ are $\varphi$-non-parabolic.
		\end{theorem}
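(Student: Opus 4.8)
The plan is to verify, for each end $E$, the weighted volume-growth criterion for $\varphi$-non-parabolicity: it suffices to show that
\[
\int_1^{\infty}\frac{t\,dt}{V_\varphi\!\left(B_t(p)\cap E\right)}<\infty,
\]
where $V_\varphi(\Omega)=\int_\Omega e^{-\varphi}\,dV=\int_\Omega e^{af}\,dV$ and $B_t(p)$ is the geodesic ball about a fixed base point $p$. This is the weighted analogue of the Grigor'yan--Varopoulos criterion and is exactly the tool underlying the Li--Tam/Munteanu--Wang theory of ends. Thus the whole problem reduces to a good lower bound for the weighted volume of each end, which in turn reduces to a lower bound for the potential $f$.

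First I would record the first-order identities forced by \eqref{genfundeq}. Tracing gives $\Delta f=(n\rho-1)R+n\lambda$, while taking the divergence of \eqref{genfundeq} together with the contracted second Bianchi identity yields
\[
\tfrac12\nabla\lvert\nabla f\rvert^2=\Big((n-1)\rho-\tfrac12\Big)\nabla R+(\rho R+\lambda)\nabla f .
\]
Setting $H:=\lvert\nabla f\rvert^2+\big(1-2(n-1)\rho\big)R-2\lambda f$, this rearranges to $\nabla H=2\rho R\,\nabla f$. When $\rho=0$ it is Hamilton's conservation law $\lvert\nabla f\rvert^2+R-2\lambda f\equiv\text{const}$, the engine behind the Cao--Zhou quadratic growth estimate for Ricci solitons.

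The main obstacle is precisely the correction term $2\rho R\,\nabla f$, which destroys exact conservation of $H$. Here the hypotheses do their work: since $\rho\ge0$ and $0\le R\le K$, one has $0\le 2\rho R\le 2\rho K$, so the error is a bounded, sign-definite multiple of $\nabla f$. Integrating $\nabla H=2\rho R\,\nabla f$ along unit-speed minimal geodesics issuing from $p$, and feeding the result back into $\lvert\nabla f\rvert^2=H-(1-2(n-1)\rho)R+2\lambda f$, one runs the Cao--Zhou argument as a differential inequality rather than an identity to obtain constants $c_1>0$, $c_2$ with
\[
f(x)\ge c_1\, r(x)^2-c_2,\qquad r(x)=d(p,x).
\]
Controlling this ODE uniformly in the geodesic, using only $\lambda>0$ and the two-sided bound on $R$, is the technical heart of the proof.

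Finally I would convert the growth of $f$ into weighted volume growth. On the shell $\big(B_t(p)\setminus B_{t/2}(p)\big)\cap E$ one has $f\ge c_1 t^2/4-c_2$, whence
\[
V_\varphi\!\left(B_t(p)\cap E\right)\ge e^{\,a(c_1 t^2/4-c_2)}\,\operatorname{Vol}\!\Big(\big(B_t(p)\setminus B_{t/2}(p)\big)\cap E\Big).
\]
Since $E$ is unbounded the shell is nonempty for all large $t$, and the ordinary volume factor is harmless---it cannot decay at the rate $e^{-ct^2}$---so $V_\varphi(B_t(p)\cap E)\ge c_3\,e^{c_4 t^2}$ for some $c_3,c_4>0$. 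Then $t\,/\,V_\varphi(B_t(p)\cap E)\le c_3^{-1}t\,e^{-c_4 t^2}$ is integrable on $[1,\infty)$, the criterion holds, and $E$ is $\varphi$-non-parabolic. As $E$ was arbitrary, all ends of $M$ are $\varphi$-non-parabolic. I note the argument is robust: even a linear lower bound $f\ge c_1 r-c_2$ would already force super-polynomial weighted volume growth and the same conclusion, so the quadratic estimate is more than enough, but it is the natural output of the soliton identities.
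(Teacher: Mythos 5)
Your argument hinges on the claim that finiteness of $\int_1^\infty t\,dt/V_\varphi\!\left(B_t(p)\cap E\right)$ is \emph{sufficient} for $\varphi$-non-parabolicity, but the Grigor'yan--Varopoulos--Karp criterion runs in the opposite direction: if that integral diverges, the (weighted) manifold is parabolic; equivalently, convergence of the integral is a \emph{necessary} condition for non-parabolicity, not a sufficient one. Fast volume growth alone never forces non-parabolicity: in dimension two, parabolicity is a conformal invariant, so any conformal metric on $\mathbb{R}^2$ is parabolic no matter how fast its volume grows, and in higher dimensions one defeats the criterion with bottleneck constructions inside a single end. To upgrade volume growth to non-parabolicity one needs genuine analytic input --- an on-diagonal heat kernel upper bound for $\Delta_\varphi$ in terms of $V_\varphi(B_{\sqrt{t}})$, or a Sobolev/Faber--Krahn inequality adapted to the weight $e^{af}$ --- and note that $\mathrm{Ric}_\varphi=(a+1)\mathrm{Ric}-a(\rho R+\lambda)g$ is not bounded below under the hypotheses, so no off-the-shelf weighted Li--Yau theory applies. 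Supplying that input is precisely where the paper spends all of its effort: it assumes a $\varphi$-parabolic end exists, takes the proper positive $\varphi$-harmonic function $h$ furnished by Li--Tam theory (Proposition \ref{thmfparachar}), uses the quadratic growth of $f$ to prove Gaussian decay of the energy $\int_{B_x(1)}|\nabla\ln h|^2$, and then runs a Bochner/Moser-iteration argument (legitimized by the Sobolev inequality available because $\mathrm{Ric}_f=(\rho R+\lambda)g\geq\lambda g$) to conclude $|\nabla \ln h|\leq Ce^{c_2r-ar^2/32}$, which forces $h$ to be bounded --- contradicting properness. Your proposal replaces this entire mechanism with a criterion that is not a theorem.

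A secondary gap: even granting a volume criterion, your weighted volume lower bound requires a lower bound on the Riemannian volume of the shells $\left(B_t(p)\setminus B_{t/2}(p)\right)\cap E$, and the assertion that this ``cannot decay at the rate $e^{-ct^2}$'' is false for general complete manifolds (a surface of revolution with profile $e^{-r^2}$ has shells of exactly that size); under the soliton hypotheses such a bound may well be extractable, but it is not free. What is correct, and does parallel the paper, is your first-order computation: the identity $\nabla\big(|\nabla f|^2+(1-2(n-1)\rho)R-2\lambda f\big)=2\rho R\,\nabla f$ and the resulting quadratic growth of $f$ match what the paper imports from Catino--Mazzieri--Mongodi (Proposition \ref{cor33RGES}) combined with Proposition \ref{prop42ineqfd}. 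But in the paper that growth enters as input to the decay estimates for $\ln h$ (and to the weight bound $e^{-\varphi}\geq e^{ar^2/16-Cr}$), not through any volume-growth characterization of non-parabolicity.
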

		
		Under the assumptions on the scalar curvature and $\rho$, demanded by the theorem above, it was proved in \cite{catino1} that the potential function grows quadratically. This is an important tool for the proof of Theorem \ref{nphinpendthm}. For Ricci solitons, this growth was obtained in \cite{caozhou}, and requires no assumptions on the scalar curvature, showing the bounds on $R$ are unnecessary \cite{MuWa4}. As for the Schouten solitons, i.e., $\rho=1/2(n-1)$, the scalar curvature is always non-negative and bounded \cite{brgs}. Therefore, we can state the following:

		\begin{corollary}\label{coro1} 
			Let $(M^n, g, f,\lambda)$, $n\geq3$, be a complete gradient shrinking Ricci or Schouten soliton. Then all ends of $M$ are $\varphi$-non-parabolic.
		\end{corollary}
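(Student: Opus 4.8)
The plan is to deduce the corollary from Theorem \ref{nphinpendthm} by verifying, in each of the two cases, that the structural hypotheses $\rho\geq0$ and $0\leq R\leq K$ are met. First I would dispose of the Schouten case, where $\rho=1/2(n-1)$. Since $n\geq3$, this value is strictly positive, so $\rho\geq0$ holds automatically. The scalar curvature hypothesis is exactly the content of \cite{brgs}, which guarantees that a complete gradient shrinking Schouten soliton has $R\geq0$ and $R$ bounded above by some positive constant $K$. With both hypotheses in hand, Theorem \ref{nphinpendthm} applies directly and yields that every end is $\varphi$-non-parabolic.

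The Ricci case $\rho=0$ requires more care: the condition $\rho\geq0$ holds trivially, but the upper bound $R\leq K$ can fail for a general complete gradient shrinking Ricci soliton, so I cannot invoke Theorem \ref{nphinpendthm} verbatim. Here I would instead track where the scalar curvature bound enters the proof of that theorem. As noted in the discussion preceding the corollary, the bound $0\leq R\leq K$, together with $\rho\geq0$, is used in \cite{catino1} only to establish the quadratic growth of the potential function $f$, and this growth estimate is in turn the analytic input needed to run the $\varphi$-non-parabolicity argument. For Ricci solitons the quadratic growth of $f$ is furnished unconditionally by Cao--Zhou \cite{caozhou}, while the nonnegativity $R\geq0$ is classical for shrinking Ricci solitons. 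Thus the same argument that proves Theorem \ref{nphinpendthm} goes through for $\rho=0$ with the estimate of \cite{caozhou} replacing that of \cite{catino1}, dispensing with any upper bound on $R$.

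The main obstacle, and the only genuinely nontrivial point, is precisely this gap between the hypotheses of Theorem \ref{nphinpendthm} and those of the corollary in the Ricci case: one must confirm that the proof of the theorem uses $R\leq K$ solely through the quadratic growth of $f$, so that substituting \cite{caozhou} for \cite{catino1} removes the bound. Once this is observed, the conclusion for Ricci solitons is in fact already contained in \cite{MuWa4}, and citing that reference provides an alternative, self-contained route for the $\rho=0$ case. Combining the two cases then establishes the corollary for every complete gradient shrinking Ricci or Schouten soliton.
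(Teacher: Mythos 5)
Your proposal is correct and follows essentially the same route as the paper: the Schouten case is handled by invoking the bounds on $R$ from \cite{brgs} and applying Theorem \ref{nphinpendthm} directly, while the Ricci case is settled by observing that the scalar curvature hypotheses enter only through the quadratic growth of $f$, which for $\rho=0$ is supplied unconditionally by \cite{caozhou}, as already established in \cite{MuWa4}. This is precisely the justification the paper gives in the paragraph preceding the corollary.
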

		
		We now address the problem of counting the ends of $\rho$-Einstein solitons. It would be natural to try to generalize the result concerning Kähler Ricci solitons to Kähler $\rho$-Einstein solitons, obtained in \cite{MuWa4}. However, as a consequence of the results of Derdzinski and Maschler \cite{dema,ma}, such $\rho$-Einstein solitons must have constant scalar curvature, which reduces the analysis to Ricci solitons, where the result is already established. We thus follow the strategy of \cite{MuWa5} and consider a suitable bound on the scalar curvature, proving the following on the number of ends of $M$:
		
		\begin{theorem}\label{thm1endscl} Let $(M^n, g, f,\lambda)$, $n\geq4$, be a complete gradient shrinking $\rho$-Einstein soliton with $\rho \in\left[0,1/2(n-1)\right)$ and
			\begin{align}\label{estcond}
				0\leq R\leq\frac{2n(n-3)\lambda}{3(n-3)-(3n^2-12n+5)\rho}.
			\end{align}
			Then $M$ is connected at infinity.
		\end{theorem}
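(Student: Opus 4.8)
The plan is to combine Theorem \ref{nphinpendthm} with the Li--Tam theory of harmonic functions adapted to the weighted Laplacian $\Delta_\varphi$, in the spirit of \cite{MuWa4,MuWa5}, and then to rule out the existence of two ends by a weighted Bochner estimate in which the bound (\ref{estcond}) plays the decisive role. First I would argue by contradiction, assuming that $M$ has at least two ends. By Theorem \ref{nphinpendthm} every end is $\varphi$-non-parabolic, so $M$ possesses at least two $\varphi$-non-parabolic ends. The weighted Li--Tam construction then produces a non-constant, bounded $\varphi$-harmonic function $u$, that is $\Delta_\varphi u=0$, with finite weighted Dirichlet energy $\int_M|\nabla u|^2e^{-\varphi}\,dV<\infty$; here $u$ arises as a limit of solutions of Dirichlet problems on an exhaustion and separates two of the non-parabolic ends. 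The goal is to show that the curvature hypotheses force $\nabla u\equiv0$, contradicting the non-constancy of $u$.

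The analytic heart is a weighted Bochner formula. Since $\Delta_\varphi u=0$, the Bakry--\'Emery--Bochner identity reads
\[\tfrac12\Delta_\varphi|\nabla u|^2=|\nabla\nabla u|^2+Ric_\varphi(\nabla u,\nabla u),\qquad Ric_\varphi:=Ric+\nabla\nabla\varphi.\]
Substituting $\varphi=-af$ and eliminating $Ric$ through the soliton equation (\ref{genfundeq}) yields $Ric_\varphi=(\rho R+\lambda)g-(1+a)\nabla\nabla f$, hence
\[\tfrac12\Delta_\varphi|\nabla u|^2=|\nabla\nabla u|^2+(\rho R+\lambda)|\nabla u|^2-(1+a)\,\nabla\nabla f(\nabla u,\nabla u).\]
The purpose of this rewriting is that the indefinite Ricci term has been traded for the Hessian of the potential, which is amenable to integration by parts against the measure $e^{-\varphi}\,dV$; the constant $a>0$ is kept free and optimized at the end.

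Next I would multiply this identity by a compactly supported cutoff $\eta^2$, integrate against $e^{-\varphi}\,dV$, and use the self-adjointness of $\Delta_\varphi$. The term $\int\nabla\nabla f(\nabla u,\nabla u)\,\eta^2e^{-\varphi}$ is integrated by parts, and the resulting bulk contributions are rewritten by means of the two contracted identities for $\rho$-Einstein solitons, namely the trace $\Delta f=(n\rho-1)R+n\lambda$ and $Ric(\nabla f)=\big(\tfrac12-(n-1)\rho\big)\nabla R$, together with the $\varphi$-harmonicity in the form $\Delta u=-a\langle\nabla f,\nabla u\rangle$. Invoking a refined Kato inequality to bound $|\nabla\nabla u|^2$ from below by a multiple of $|\nabla|\nabla u||^2$ and choosing $a$ optimally, one is led, after letting $\eta\to1$, to an inequality of the shape
\[0\ \ge\ \int_M\Big(|\nabla\nabla u|^2+\big(c_1(n,\rho)\,\lambda-c_2(n,\rho)\,R\big)|\nabla u|^2\Big)e^{-\varphi}\,dV,\]
with explicit positive coefficients $c_1,c_2$. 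The bound (\ref{estcond}) is precisely the requirement that $c_1(n,\rho)\lambda-c_2(n,\rho)R\ge0$ for $0\le R$ satisfying (\ref{estcond}); the integrand is then non-negative while the left side is non-positive, so both terms must vanish. In particular $\nabla\nabla u\equiv0$, whence $|\nabla u|$ is constant, and since the weighted density $e^{-\varphi}=e^{af}$ has infinite total mass, finiteness of the energy forces that constant to be zero, giving $\nabla u\equiv0$ and the desired contradiction.

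The main obstacle is twofold. Analytically, the integrations by parts are performed on a non-compact manifold, so one must verify that all cutoff and boundary terms vanish as $\eta\to1$; this rests on the quadratic growth of $f$ available under the present hypotheses (already used for Theorem \ref{nphinpendthm}), combined with the finiteness of the weighted Dirichlet energy and the boundedness of $u$, which supply the decay needed to absorb the error terms. Algebraically, the delicate point is to propagate the exact coefficients through the Bochner rewriting, the two Bianchi-type identities, the Kato inequality, and the optimization in $a$, so that the threshold for $R$ emerges precisely as in (\ref{estcond}); the factor $(n-3)$ in the numerator explains the restriction $n\ge4$, and the range $\rho\in[0,1/2(n-1))$ keeps $\tfrac12-(n-1)\rho>0$ and makes the right-hand side of (\ref{estcond}) a positive multiple of $\lambda$. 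Specializing to $\rho=0$ recovers the bound $R\le\frac{2n}{3}\lambda$ of \cite{MuWa5}, a convenient consistency check on the coefficient bookkeeping.
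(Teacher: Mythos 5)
Your setup (contradiction, Theorem \ref{nphinpendthm} to make both ends $\varphi$-non-parabolic, the Li--Tam function $u$ of Proposition \ref{defuvarphi}) matches the paper, but the analytic core of your proposal fails. The inequality you claim, $0\geq\int_M\left(|\nabla\nabla u|^2+(c_1\lambda-c_2R)|\nabla u|^2\right)e^{-\varphi}$, cannot come out of the weighted Bochner formula for $|\nabla u|^2$ integrated against $e^{-\varphi}=e^{af}$. Track the Hessian term: integrating $\int_M\nabla\nabla f(\nabla u,\nabla u)\,\eta^2e^{af}$ by parts and using $\Delta u=-a\langle\nabla f,\nabla u\rangle$, the quadratic terms $a\langle\nabla f,\nabla u\rangle^2$ cancel and one is left (modulo cutoff terms) with $-\tfrac{1}{2}\int_M\langle\nabla f,\nabla|\nabla u|^2\rangle\,\eta^2e^{af}$, which after a second integration by parts equals $\tfrac{1}{2}\int_M|\nabla u|^2\left(\Delta f+a|\nabla f|^2\right)\eta^2e^{af}$. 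Hence your coefficient is actually $\rho R+\lambda-\tfrac{1+a}{2}\Delta f-\tfrac{a(1+a)}{2}|\nabla f|^2$, and by Propositions \ref{cor33RGES} and \ref{prop42ineqfd} the term $|\nabla f|^2\geq\alpha f-\beta$ grows quadratically in the distance, so this coefficient is negative and unbounded below for every $a>0$; no Kato refinement, no use of \eqref{cat1}--\eqref{cat2}, and no optimization in $a$ removes it (the alternative of estimating $f_iu_{ij}u_j$ by Young against $|\nabla\nabla u|^2$ reproduces the same unbounded $|\nabla f|^2|\nabla u|^2$ term). Taking $a=0$ is not an option either: the weighted theory underlying Theorem \ref{nphinpendthm} and Proposition \ref{defuvarphi} requires $a>0$, and even formally the residual coefficient contains $\tfrac{2-n}{2}\lambda<0$. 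So the inequality at the heart of your argument does not exist, and the bound \eqref{estcond} never gets a chance to act.

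The paper circumvents exactly this obstruction by a structurally different mechanism. It never integrates $|\nabla u|^2$ against $e^{af}$; instead it starts from the scalar curvature identity \eqref{cat3}, $A\Delta R=\langle\nabla R,\nabla f\rangle+2(\rho R^2-|Ric|^2+\lambda R)$ with $A=1-2(n-1)\rho$, multiplies by $|\nabla u|\phi^2e^{bf}$ (first power of $|\nabla u|$, and a \emph{second, independent} weight parameter $b<a$), and integrates by parts, the boundary terms being controlled by the integrability Lemmas \ref{lem1catend}--\ref{lem4catend}. In the resulting two-parameter inequality (Proposition \ref{mainineqshr}) the dangerous terms $\int_M R\langle\nabla|\nabla u|,\nabla f\rangle e^{bf}$ and $\int_M R|\nabla f|^2|\nabla u|e^{bf}$ carry the coefficients $(2b-a)A+1$ and $Ab^2+b$, and the choice $a=A^{-1}$, $b=0$ kills both simultaneously --- this is the replacement for the absorption you were hoping for. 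After that, the trace-adjusted Young estimate with optimal $\gamma=\tfrac{3}{2n}$, together with $|Ric|^2\geq R^2/n$ and $\Delta f=(n\rho-1)R+n\lambda$, yields precisely the threshold in \eqref{estcond}. Note finally that since \eqref{estcond} is non-strict, the paper must treat the borderline case: the integral inequality forces $R$ to be identically equal to the upper bound with equality in every estimate, and a separate rigidity computation (constancy of $R$ in \eqref{cat3} plus the equality case of Young) produces a polynomial relation in $n$ and $\rho$ with no solutions for $n\geq4$, $\rho\in[0,1/2(n-1))$. Your proposed endgame (infinite weighted volume forcing $\nabla u\equiv0$) is sensible in itself, but it sits on top of an inequality that cannot be derived by your route.
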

		
		The range $[0,1/2(n-1))$ in which the parameter $\rho$ lies in the theorem above was also considered in \cite{catino1}, where the authors proved rigidity of complete shrinking $\rho$-Einstein solitons under appropriate bounds on the Ricci and sectional curvatures. In that paper, this interval is required to show that the potential function is convex at infinity (see Proposition 4.2 of \cite{catino1}). Our reason is quite different. In fact, for the proof of Theorem \ref{thm1endscl}, we assume $\rho\geq0$ to get rid of the $\varphi$-parabolic ends. This follows from Theorem \ref{nphinpendthm}, once by \eqref{estcond} the scalar curvature is bounded. We then need to choose parameters in a certain inequality (see Proposition \ref{mainineqshr}) to exclude the possibility of more than one $\varphi$-non-parabolic. This is effective when the inverse of $1-2(n-1)\rho$ is positive, and since the bound on $R$ is as in \eqref{estcond}.
		
		A consequence of Theorem \ref{thm1endscl} is the impossibility of constructing a $\rho$-Einstein soliton through connected sums of knew ones without destroying \eqref{estcond}, when $\rho \in\left[0,1/2(n-1)\right)$. As described previously, the first inequality is not necessary when $\rho=0$. In fact, we already know $R>0$ on a gradient shrinking Ricci soliton with non-zero Ricci tensor \cite{chen,pigrimse}. It is reasonable to believe that the non-negativity of $R$ can be removed from Theorem \ref{thm1endscl}. This is motivated by the fact that compact ancient Ricci-Bourguignon flows with $\rho\leq 1/2(n-1)$ have non-negative scalar curvature.
		
		Theorems \ref{nphinpendthm} and \ref{thm1endscl} were motivated by the recent work on Ricci solitons by Munteanu and Wang \cite{MuWa4,MuWa5}. Besides the techniques for counting ends, mentioned above, these works rely on some identities at the disposal of Ricci solitons. One of these identities is the Hamilton identity \cite{hamilton2}, which reads as
		$$R+\vert\nabla f\vert^2-2\lambda=c_{0},$$
		where $c_{0}$ is a constant. Using this identity, it is possible to derive a variety of equations \cite{pointofview} which are essential to the theory. In the context of analyzing the topology at infinity, the Hamilton identity provides barrier functions to ends and suitable Poincaré inequalities \cite{MuWa4,MuWa5}. It is worthwhile mentioning that bounds to the scalar curvature of Ricci solitons follow from this identity combined with maximum principles \cite{chowluyang}.
		
		Unfortunately, a similar identity is not yet available for $\rho$-Einstein solitons. We overcome this obstacle by using two inequalities connecting the potential function and its gradient, obtained by Catino et. al \cite{catino1}, which is accessible at our conditions (see also \cite{brgs} for Schouten solitons, where no conditions are required). Since we are using a couple of inequalities rather than an equality, the proofs are not identical to those in \cite{MuWa4,MuWa5}. Furthermore, the $\rho$-Einstein soliton equation \eqref{genfundeq} carries a scalar curvature term not present in the Ricci soliton equation, which brings further computational challenges.
		
		This paper is organized as follows. In Section 2, we recall basic identities of $\rho$-Einstein solitons and state important results on smooth metric measure spaces
		which will be used in the proofs of our results. In Section \ref{phinonpar}, we prove Theorem \ref{nphinpendthm}. In Section \ref{onephinpend}, assuming the existence of two $\varphi$-non-parabolic ends, we establish a two-parameter inequality relating the resulting $\varphi$-harmonic function given by the theory of Li and Tam with the geometry of $M$. In Section \ref{lastsec}, we choose the parameters appropriately to prove Theorem \ref{thm1endscl}.

		\section{Preliminaries}\label{preliminariesresults}
		In this section, we recall certain results used in the proof of the main results of this paper. The first of them lists some consequences of equation \eqref{genfundeq}, which are central in the study of gradient $\rho$-Einstein solitons, obtained in \cite{catino}.
		
		\begin{proposition}[\cite{catino}]\label{catino}
			Let $(M^n,g,f,\lambda)$, $n\geq3$, be a gradient $\rho$-Einstein soliton. Then, the following identities hold
			\begin{eqnarray}
				&\Delta f=(n\rho-1)R+n\lambda,\label{cat1}\\
				&(1-2(n-1)\rho)\nabla R=2Ric(\nabla f),\label{cat2}\\
				&(1-2(n-1)\rho)\Delta R=\langle\nabla R,\nabla f\rangle + 2(\rho R^2-\vert Ric\vert^2+\lambda R)\label{cat3}.
			\end{eqnarray}
		\end{proposition}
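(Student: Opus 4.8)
The plan is to derive the three identities in order, each from the previous one, starting from the defining equation \eqref{genfundeq} written in local coordinates as $R_{ij}+\nabla_i\nabla_j f=(\rho R+\lambda)g_{ij}$. First I would obtain \eqref{cat1} simply by taking the metric trace: since $g^{ij}R_{ij}=R$, $g^{ij}\nabla_i\nabla_j f=\Delta f$, and $g^{ij}g_{ij}=n$, tracing gives $R+\Delta f=n(\rho R+\lambda)$, and rearranging yields $\Delta f=(n\rho-1)R+n\lambda$.

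Next I would prove \eqref{cat2} by taking the divergence of \eqref{genfundeq}, i.e. applying $\nabla^i$ to $R_{ij}+\nabla_i\nabla_j f=(\rho R+\lambda)g_{ij}$. Two standard tools enter here: the contracted second Bianchi identity $\nabla^iR_{ij}=\tfrac12\nabla_jR$, and the Bochner commutation identity $\nabla^i\nabla_i\nabla_j f=\Delta\nabla_j f=\nabla_j\Delta f+R_{jk}\nabla^k f$. Since $\lambda$ is constant, the right-hand side contributes only $\rho\nabla_jR$. Using the already-established \eqref{cat1} to replace $\nabla_j\Delta f$ by $(n\rho-1)\nabla_jR$ and collecting the $\nabla R$ terms, the scalar coefficient simplifies to $(n-1)\rho-\tfrac12$; multiplying through by $-2$ then produces $(1-2(n-1)\rho)\nabla R=2\,Ric(\nabla f)$.

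Finally I would deduce \eqref{cat3} by taking the divergence of \eqref{cat2}. The left-hand side yields $(1-2(n-1)\rho)\Delta R$. Differentiating the right-hand side gives $2(\nabla^jR_{jk})\nabla^k f+2R_{jk}\nabla^j\nabla^k f$; the contracted Bianchi identity turns the first term into $\langle\nabla R,\nabla f\rangle$, while substituting the Hessian $\nabla_j\nabla_k f=(\rho R+\lambda)g_{jk}-R_{jk}$ read off from \eqref{genfundeq} into the second term gives $2\big((\rho R+\lambda)R-\vert Ric\vert^2\big)=2(\rho R^2-\vert Ric\vert^2+\lambda R)$. Combining the two pieces yields \eqref{cat3}. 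I do not expect any genuine obstacle here: the argument is a direct local computation, and the only points requiring care are the sign convention in the Bochner commutation formula and the bookkeeping of the scalar coefficient of $\nabla R$ in the derivation of \eqref{cat2}. No curvature estimate or global hypothesis on $M$ is needed, since the identities are purely local consequences of \eqref{genfundeq} together with the Bianchi identities.
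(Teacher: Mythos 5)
Your derivation is correct and is essentially the same as the one in the cited source \cite{catino}: trace the soliton equation for \eqref{cat1}, take its divergence using the contracted second Bianchi identity and the commutation formula $\Delta\nabla_j f=\nabla_j\Delta f+R_{jk}\nabla^k f$ for \eqref{cat2}, then take the divergence of \eqref{cat2}, substituting the Hessian from \eqref{genfundeq}, for \eqref{cat3}. Note that the paper itself states this proposition without proof, quoting \cite{catino}, so your computation fills in exactly the standard argument with the correct coefficient bookkeeping.
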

		
		Assuming the scalar curvature of a $\rho$-Einstein soliton is non-negative and bounded, Catino et. al used Proposition \ref{catino} to prove in \cite{catino1} the following estimate for the gradient of the potential function.
		
		\begin{proposition}[\cite{catino1}]\label{cor33RGES} Let $(M^n,g,f,\lambda)$ be a gradient shrinking $\rho$-Einstein soliton with $\rho>0$, scalar curvature $R\geq0$ and such that $|R|<K$ for some positive constant $K$. Then, either $f$ is constant or there exist positive real constants $\alpha, \beta, \epsilon, \delta$ such that
			$$\alpha f(r)-\beta\leq \vert\nabla f\vert^2(r)\leq \epsilon f(r)+\delta,$$
			where $r$ is the distance to a connected component $\Sigma_0\subset M$ of some regular level set of $f$.
		\end{proposition}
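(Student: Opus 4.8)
The plan is to bypass the missing Hamilton-type identity by exploiting the structural identities of Proposition~\ref{catino} directly along the gradient flow of $f$. The key structural observation is that, tracing \eqref{genfundeq}, the Bakry--\'Emery tensor satisfies $Ric+\nabla\nabla f=(\rho R+\lambda)g$, so the scalar factor $\rho R+\lambda$ is squeezed between the positive constants $\lambda$ and $\rho K+\lambda$ once $0\le R\le K$ and $\rho>0$. I would therefore look for a quantity, built out of $|\nabla f|^2$ and $R$, whose derivative along the integral curves of $\nabla f$ is a controlled multiple of the derivative of $f$; integrating such a relation should give an affine control of that quantity by $f$, and hence (since $R$ is bounded) the two desired estimates for $|\nabla f|^2$.

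Concretely, let $\gamma$ be an integral curve of $\nabla f$, so that $\tfrac{d}{dt}h=\langle\nabla h,\nabla f\rangle$ along $\gamma$ for any $h$; in particular $\tfrac{d}{dt}f=|\nabla f|^2$. Differentiating gives $\tfrac{d}{dt}|\nabla f|^2=2\nabla\nabla f(\nabla f,\nabla f)$, and \eqref{genfundeq} turns this into $2(\rho R+\lambda)|\nabla f|^2-2\,Ric(\nabla f,\nabla f)$. Pairing \eqref{cat2} with $\nabla f$ yields $2\,Ric(\nabla f,\nabla f)=(1-2(n-1)\rho)\langle\nabla R,\nabla f\rangle=(1-2(n-1)\rho)\tfrac{d}{dt}R$, so the function $Q:=|\nabla f|^2+(1-2(n-1)\rho)R$ obeys $\tfrac{d}{dt}Q=2(\rho R+\lambda)|\nabla f|^2=2(\rho R+\lambda)\tfrac{d}{dt}f$ along $\gamma$. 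Since $f$ increases along $\gamma$, I may reparametrise by the value of $f$ and record the clean relation $\tfrac{dQ}{df}=2(\rho R+\lambda)\in[\,2\lambda,\,2(\rho K+\lambda)\,]$, valid for every admissible $\rho$ (no division by $1-2(n-1)\rho$ is ever needed, so the Schouten value causes no trouble).

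To finish, I fix $\Sigma_0$, a connected component of a regular level set $\{f=f_0\}$, and foliate $\{f>f_0\}$ by the gradient flow lines emanating from $\Sigma_0$. Because $\Sigma_0$ is compact, $Q$ lies between two constants $Q_{\min}\le Q_{\max}$ on $\Sigma_0$; integrating $\tfrac{dQ}{df}\in[2\lambda,2(\rho K+\lambda)]$ from $f_0$ to $f(x)$ gives $Q_{\min}+2\lambda(f-f_0)\le Q\le Q_{\max}+2(\rho K+\lambda)(f-f_0)$. Substituting $|\nabla f|^2=Q-(1-2(n-1)\rho)R$ and using $0\le R\le K$ to absorb the bounded correction into the additive constants produces $\alpha f-\beta\le|\nabla f|^2\le\epsilon f+\delta$ with $\alpha=2\lambda$ and $\epsilon=2(\rho K+\lambda)$, the constants $\beta,\delta$ being enlarged if necessary to stay positive. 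If $\nabla f\equiv0$ the statement is vacuous, which is the alternative allowed in the conclusion.

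The main obstacle I foresee is not the computation above but the geometric input that makes it global: I must know that $f$ is proper, so that $\Sigma_0$ is compact, the exterior is genuinely foliated by flow lines on which $\nabla f\neq0$, and the distance $r$ to $\Sigma_0$ tends to infinity exactly where $f$ does. This properness (equivalently, the at-least-quadratic growth of $f$) does not follow from the flow computation alone; I would establish it separately, for instance by integrating $\nabla\nabla f(\gamma',\gamma')=(\rho R+\lambda)-Ric(\gamma',\gamma')$ along minimal geodesics issuing from $\Sigma_0$ and bounding $\int Ric(\gamma',\gamma')$ from above through the index form of a minimizing geodesic, combined with $0\le R\le K$. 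This simultaneously shows $\nabla f\neq0$ outside a compact set, fixing a legitimate $f_0$, after which the estimates, phrased in terms of $r$, follow from the comparability of $f$ and $r^2$.
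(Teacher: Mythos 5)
First, a point of reference: the paper itself gives no proof of Proposition \ref{cor33RGES}; it is imported from \cite{catino1}, so your attempt can only be judged against the argument given there. Your central computation is correct, and it is in fact the same mechanism used in \cite{catino1}: setting $A=1-2(n-1)\rho$ and $Q=|\nabla f|^2+AR$, the soliton equation \eqref{genfundeq} together with \eqref{cat2} yields $\tfrac{d}{dt}Q=2(\rho R+\lambda)\tfrac{d}{dt}f$ along integral curves of $\nabla f$, and integrating in the parameter $f$ with $0\leq R\leq K$ gives the affine bounds with $\alpha=2\lambda$ and $\epsilon=2(\rho K+\lambda)$. The heart of your proposal is therefore sound and matches the cited source.

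The genuine gaps are in the globalization. (i) You bound $Q$ on $\Sigma_0$ by asserting that $\Sigma_0$ is compact, but compactness of level-set components is essentially properness of $f$, which is precisely what is not available at this stage: in this paper, properness (quadratic growth) is \emph{deduced from} Proposition \ref{cor33RGES} combined with Proposition \ref{prop42ineqfd}, and your closing paragraph only sketches a Cao--Zhou-type substitute without carrying it out. The ingredient that actually repairs this, and which you never invoke, is the rectifiability theorem of Catino--Mazzieri \cite{catino}: for $\rho\neq 0$, $|\nabla f|$ is constant on connected components of regular level sets. Combined with \eqref{genfundeq} and \eqref{cat2}, this shows $Q$ is \emph{constant} on $\Sigma_0$ — no compactness needed — and it is also what makes the statement meaningful in the first place, since writing $f(r)$ and $|\nabla f|^2(r)$ presupposes that these quantities are functions of the distance $r$ to $\Sigma_0$, something your flow-line estimates alone do not provide. (ii) Your claim that the gradient flow lines emanating from $\Sigma_0$ foliate $\{f>f_0\}$ is false in general: on the round cylinder $\mathbb{R}\times\mathbb{S}^{n-1}$, which is a shrinking Schouten soliton satisfying all hypotheses here, $\{f=f_0\}$ has two components and the flow from $\Sigma_0$ reaches only one of the two ends of $\{f>f_0\}$; more generally, points of $\{f>f_0\}$ may flow backward into critical points without ever meeting $\Sigma_0$. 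So what you prove is an estimate, parametrized by the value of $f$, on the saturation of $\Sigma_0$ under the flow; converting this into the asserted inequality between functions of $r$ on the appropriate region again requires rectifiability. In short: the ODE identity is right and is exactly the one in \cite{catino1}, but without the Catino--Mazzieri rectifiability theorem the proof does not close.
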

		
		As pointed out in \cite{catino1}, the constants $\alpha, \beta, \epsilon$ and $\delta$ are possibly depending on the regular level $\Sigma_{0}$. A particular consequence of this proposition is that the potential function satisfies $\vert\nabla f\vert^2\leq \epsilon f+\delta$. Once we also have 
		$$Ric_{f}\geq\lambda g,$$
		with $\lambda>0$, we can apply the following result of \cite{MuWa3} to obtain the asymptotic behavior of $f$.
		
		\begin{proposition}[\cite{MuWa3}]\label{prop42ineqfd}
			Let $(M^{n}, g, e^{-f}dV)$ be a complete smooth metric measure space. Assume $\emph{Ric}_f\geq\frac{1}{2}$ and $|\nabla f|^2\leq f$. Then there exists a constant $a>0$ such that
			$$\frac{1}{4}(d(x,x_0)-a)^2\leq f(x)\leq\frac{1}{4}(d(x,x_0)+a)^2,$$
			for any $x\in M$ and $d(x,x_0)\geq r_0$. The constants $a$ and $r_0$ depend only on $n$ and $f(x_0)$.
		\end{proposition}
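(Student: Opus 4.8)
The plan is to establish the two inequalities separately, deriving the upper bound first by an elementary Lipschitz estimate and then using it to control the harder lower bound. Fix $x\in M$ with $\ell:=d(x,x_0)$ large, let $\gamma:[0,\ell]\to M$ be a minimizing unit-speed geodesic from $x_0$ to $x$, put $X=\gamma'$, and write $h(s)=f(\gamma(s))$, so that $h'(s)=\langle\nabla f,X\rangle$ and, since $\gamma$ is a geodesic, $h''(s)=\nabla\nabla f(X,X)$.

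\emph{Upper bound.} The hypothesis $|\nabla f|^2\le f$ forces $f\ge0$, so $\sqrt{f+\varepsilon}$ is defined for every $\varepsilon>0$ and satisfies $|\nabla\sqrt{f+\varepsilon}|=|\nabla f|/(2\sqrt{f+\varepsilon})\le \tfrac12$. Letting $\varepsilon\to0$, the function $\sqrt{f}$ is $\tfrac12$-Lipschitz; integrating along $\gamma$ gives $\sqrt{f(x)}\le \sqrt{f(x_0)}+\tfrac12\,d(x,x_0)$, that is $f(x)\le \tfrac14\big(d(x,x_0)+2\sqrt{f(x_0)}\big)^2$. This is the desired upper bound, with $a$ governed by $\sqrt{f(x_0)}$.

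\emph{Lower bound.} Here the strategy is to prove $h'(s)\ge \tfrac{s}{2}-C$ and then integrate. Integrating $Ric_f\ge \tfrac12$ along $\gamma$ and using $\nabla\nabla f(X,X)=Ric_f(X,X)-Ric(X,X)$ gives
$$h'(s)-h'(0)=\int_0^s\nabla\nabla f(X,X)\,d\tau\ \ge\ \frac{s}{2}-\int_0^s Ric(X,X)\,d\tau,$$
so the whole problem reduces to a bound $\int_0^s Ric(X,X)\,d\tau\le C$ uniform in $s$. To obtain it I would invoke the second variation of arc length: since $\gamma|_{[0,s]}$ minimizes, testing the index form with the fields $\phi E_i$, where $\{E_i\}$ is a parallel orthonormal frame normal to $X$ and $\phi$ vanishes at the endpoints, yields $\int_0^s\phi^2 Ric(X,X)\,d\tau\le (n-1)\int_0^s(\phi')^2\,d\tau$. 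Taking $\phi\equiv1$ away from two unit-length tapering intervals bounds the bulk by a dimensional constant, while the two endpoint contributions must be handled with more care, rewriting $Ric(X,X)=Ric_f(X,X)-\nabla\nabla f(X,X)$ and using the gradient bound $|\nabla f|\le\sqrt{f}$ together with the upper bound already established to keep them bounded (this is the delicate point, discussed below). Once $\int_0^s Ric(X,X)\,d\tau\le C$ holds, integrating $h'(s)\ge \tfrac{s}{2}-C'$ from $0$ to $\ell$ produces $f(x)=h(\ell)\ge \tfrac14(\ell-a)^2$ after enlarging $a$; tracking the constants shows $a$ and $r_0$ depend only on $n$ and $f(x_0)$.

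The step I expect to be the main obstacle is exactly the uniform control of $\int_0^s Ric(X,X)\,d\tau$. The hypothesis bounds only the Bakry--\'Emery tensor $Ric_f$ from below, so neither $Ric(X,X)$ nor $\nabla\nabla f(X,X)$ is controlled on its own, and a crude hat cut-off leaves an endpoint term of the same order $s$ as the principal term, giving merely a \emph{linear} lower bound for $f$. Recovering the sharp quadratic rate hinges on exploiting that $|\nabla f|\le\sqrt f$ makes $\sqrt f$ precisely $\tfrac12$-Lipschitz---matching the constant $\tfrac12$ in $Ric_f\ge\tfrac12$---so that the endpoint contribution of the index form can be absorbed rather than lost. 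This delicate balancing of the gradient estimate against the curvature lower bound is the heart of the argument.
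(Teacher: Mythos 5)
This proposition is not proved in the paper at all: it is quoted from Munteanu--Wang \cite{MuWa3}, whose argument is the Cao--Zhou second-variation proof adapted to the weighted setting. So your proposal must be measured against that argument. Your upper bound is correct and is exactly the standard one: $|\nabla f|^2\le f$ makes $\sqrt f$ a $\tfrac12$-Lipschitz function, and integrating along a minimizing geodesic gives $f(x)\le\tfrac14\big(d(x,x_0)+2\sqrt{f(x_0)}\big)^2$. The lower bound, however, contains a genuine gap, which you half-acknowledge but do not close, and which cannot be closed along the route you propose. Your plan is to prove a uniform bound $\int_0^s Ric(X,X)\,d\tau\le C$, deduce the pointwise estimate $h'(s)\ge\tfrac s2-C$, and integrate. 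The endpoint strategy you sketch --- rewriting $Ric(X,X)=Ric_f(X,X)-\nabla\nabla f(X,X)$ on the tapering intervals and invoking $|\nabla f|\le\sqrt f$ --- fails for a structural reason: the hypothesis gives only a \emph{lower} bound on $Ric_f$, never an upper bound, so the term $\int(1-\phi^2)Ric_f$ cannot be estimated from above; and integrating $\int(1-\phi^2)h''$ by parts reintroduces $h'$ near the far endpoint, which is of size $\sim s/2$ with the unfavorable sign --- it is precisely the quantity you are trying to bound from below. In short, the uniform bound on $\int Ric$ is essentially equivalent to the statement you want, and the known proof never establishes it, nor the pointwise bound $h'(s)\ge\tfrac s2-C$ at the endpoint.

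The actual argument turns the difficulty around: one never bounds $\int Ric$ at all, and one never integrates $h'$. Take $\phi$ trapezoidal on $[0,\ell]$, vanishing at \emph{both} endpoints (unit ramps, $\phi\equiv1$ in between). Second variation plus $h''=Ric_f(X,X)-Ric(X,X)\ge\tfrac12-Ric(X,X)$ gives
\begin{equation*}
-2\int_0^\ell\phi\phi'h'=\int_0^\ell\phi^2h''\ \ge\ \frac12\int_0^\ell\phi^2-(n-1)\int_0^\ell(\phi')^2\ \ge\ \frac\ell2-C(n),
\end{equation*}
with no boundary terms because $\phi(0)=\phi(\ell)=0$. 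The left side equals $-2\int_0^1\tau h'\,d\tau+2\int_{\ell-1}^\ell(\ell-\tau)h'\,d\tau$; the near-end term is at most $\sqrt{f(x_0)}+\tfrac12$ by $|h'|\le|\nabla f|\le\sqrt f$ and the already-proved upper bound, while the far-end term is at most $\sup_{[\ell-1,\ell]}h'$. Solving the inequality for this supremum yields a point $\tau^*\in[\ell-1,\ell]$ with $h'(\tau^*)\ge\tfrac\ell2-C(n,f(x_0))$. Now, instead of integrating, one converts this single large derivative into a bound on $f$ through the gradient hypothesis again:
\begin{equation*}
\sqrt{f(\gamma(\tau^*))}\ \ge\ |\nabla f|(\gamma(\tau^*))\ \ge\ h'(\tau^*)\ \ge\ \frac\ell2-C,
\end{equation*}
and the $\tfrac12$-Lipschitz property of $\sqrt f$ transports this from $\gamma(\tau^*)$ to $x$ at a cost of $\tfrac12$, giving $f(x)\ge\tfrac14(\ell-2C-1)^2$ for $\ell\ge r_0(n,f(x_0))$. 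So the term your proposal hopes to ``absorb'' is, in the correct proof, the conclusion itself: one solves for $\sup_{[\ell-1,\ell]}h'$ and feeds it back through $|\nabla f|\le\sqrt f$. That inversion is the missing idea.
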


		Shrinking $\rho$-Einstein solitons under our assumptions have many other interesting properties, regarded as smooth metric measure spaces. See \cite{MuWa3}.
		
		We now present useful properties of $\varphi$-parabolic and $\varphi$-non-parabolic ends. Recall that $u$ is $\varphi$-harmonic function if $\Delta_\varphi u=\Delta u-\langle \nabla\varphi,\nabla u\rangle=0$.

		
		\begin{proposition}\label{thmfparachar}
			Suppose $M$ has at least two ends. An end $E$ of $M$ is $\varphi$-parabolic if and only if there is a positive $\varphi$-harmonic function $u$ defined on $E$ so that $u\geq1$, $u=1$ on $\partial E$ and 
			$$\lim_{y\to E(\infty)}u(y)=\infty,$$
			where $E(\infty)$ denotes the infinity of $E$.
		\end{proposition}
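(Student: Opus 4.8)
The statement is the weighted analogue of the classical Li--Tam characterization of parabolic ends, so the plan is to transfer that theory to the drifted Laplacian $\Delta_\varphi$, which is self-adjoint with respect to $d\mu=e^{-\varphi}dV$; all integration-by-parts identities below are understood with this measure. The hypothesis that $M$ has at least two ends guarantees that $\partial E$ is a nonempty compact hypersurface, so the boundary-value problems that follow are well posed. Fix $p\in M$, write $E(R)=E\cap B_p(R)$, and for large $R$ let $\omega_R$ be the harmonic measure of the end, i.e. the solution of $\Delta_\varphi\omega_R=0$ on the interior of $E(R)$ with $\omega_R=1$ on $\partial E$ and $\omega_R=0$ on $\partial B_p(R)\cap E$. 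By the maximum principle $0\le\omega_R\le1$, and $\omega_R$ is nondecreasing in $R$, so $\omega_R\nearrow\omega_\infty$, a bounded $\varphi$-harmonic function with $\omega_\infty=1$ on $\partial E$. The working dichotomy is that $E$ is $\varphi$-parabolic precisely when $\omega_\infty\equiv1$: if $\omega_\infty\not\equiv1$ then $\omega_\infty$ is a bounded nonconstant barrier, which is equivalent to a positive Neumann Green's function on $E$.

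For the implication that a proper $u$ forces parabolicity, I would argue by a direct barrier comparison. Set $m_R=\min_{\partial B_p(R)\cap E}u$, so $m_R\to\infty$ since $u\to\infty$, and consider the $\varphi$-harmonic function $h_R=1-\frac{u-1}{m_R-1}$, defined once $m_R>1$. On $\partial E$ we have $u=1$, hence $h_R=1=\omega_R$, while on $\partial B_p(R)\cap E$ we have $u\ge m_R$, hence $h_R\le0=\omega_R$. The maximum principle on $E(R)$ gives $\omega_R\ge h_R$, and letting $R\to\infty$ with $x$ fixed yields $\omega_\infty(x)\ge\lim_R\bigl(1-\tfrac{u(x)-1}{m_R-1}\bigr)=1$. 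Thus $\omega_\infty\equiv1$ and $E$ is $\varphi$-parabolic.

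For the converse I would construct a weighted Evans potential. Assuming $E$ is $\varphi$-parabolic, we have $\sigma_R:=1-\omega_R\to0$ locally uniformly, and the capacities $\mathrm{cap}(R)=\int_{\partial E}\langle\nabla\sigma_R,\nu\rangle\,d\mu$ decrease to $0$. Normalizing by the flux, $g_R:=\sigma_R/\mathrm{cap}(R)$ is $\varphi$-harmonic, nonnegative, vanishes on $\partial E$, and carries unit flux through $\partial E$; since the flux is fixed, boundary Harnack and interior elliptic estimates bound the family locally, and I would extract a locally uniform limit $\tilde u\ge0$, $\varphi$-harmonic on $E$, with $\tilde u=0$ on $\partial E$ and unit flux. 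A cutoff energy estimate then shows $\tilde u$ cannot be bounded: if $\tilde u\le\Lambda$, testing $\Delta_\varphi\tilde u=0$ against $\phi^2\tilde u$ for cutoffs $\phi$ of arbitrarily small weighted Dirichlet energy (available precisely because $E$ is parabolic) forces $\int_E|\nabla\tilde u|^2\,d\mu=0$, hence $\tilde u\equiv0$ and zero flux, a contradiction. Setting $u=\tilde u+1$ gives $u\ge1$, $u=1$ on $\partial E$, and $\Delta_\varphi u=0$.

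The main obstacle is upgrading unboundedness of $\tilde u$ to properness, i.e. $\lim_{y\to E(\infty)}u(y)=\infty$, since an unbounded harmonic function need not exhaust $E$. I expect to handle this exactly as in the classical Evans--Selberg/Nakai construction: rather than a single flux-normalized limit, build $u$ through a telescoping sum over a rapidly chosen exhaustion $R_1<R_2<\cdots$, using $\sigma_R\to0$ to render every sublevel set $\{u\le c\}$ relatively compact by design. The only departures from the unweighted Li--Tam argument are bookkeeping: every flux and energy integral carries the weight $e^{-\varphi}$, and the maximum principle and Harnack inequality are applied to $\Delta_\varphi$, both valid since $\Delta_\varphi$ is uniformly elliptic and self-adjoint for $d\mu$.
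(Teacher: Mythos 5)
You should first be aware that the paper does not prove Proposition \ref{thmfparachar} at all: it is imported from Li--Tam theory, being the weighted analogue of Lemma 20.7 of \cite{pLi}, in the form used by Munteanu--Wang in \cite{MuWa1,MuWa2,MuWa4,MuWa5}. So your sketch has to stand as a complete proof on its own. Half of it does: the implication ``proper $\varphi$-harmonic $u$ $\Rightarrow$ $E$ is $\varphi$-parabolic'' via comparison of the harmonic measures $\omega_R$ with $h_R=1-\frac{u-1}{m_R-1}$ is correct, and the weighted setting genuinely costs nothing there. Two caveats even in that half: your ``working dichotomy'' ($E$ is $\varphi$-parabolic iff $\omega_\infty\equiv1$) is not the definition used in the paper --- $\varphi$-parabolicity means nonexistence of a positive Neumann Green's function for $\Delta_\varphi$ on $E$ --- so this equivalence is itself a theorem that your argument silently assumes; and your flux-normalized compactness step invokes a boundary Harnack principle that should at least be justified for $\Delta_\varphi$ near the compact hypersurface $\partial E$ (routine, but part of the proof).

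The genuine gap is the one you flagged yourself and then deferred: properness in the direction ``$E$ $\varphi$-parabolic $\Rightarrow$ there exists $u$ with $u\to\infty$ at $E(\infty)$.'' Your energy argument gives only unboundedness of $\tilde u$, and unboundedness is strictly weaker: the maximum-principle and flux comparisons available to you control $\sup_{\partial B_p(R)\cap E}\tilde u$ (one gets $\max_{\partial B_p(R)\cap E}\sigma_{R'}\geq\mathrm{cap}(R')/\mathrm{cap}(R)$ from comparison with $\sigma_R$, hence divergence of suprema), but properness requires the \emph{infima} over $\partial B_p(R)\cap E$ to diverge, which no estimate in your sketch addresses. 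Appealing to ``the classical Evans--Selberg/Nakai construction'' does not close this: Nakai's theorem is a Riemann-surface result, and in higher dimensions the existence of proper harmonic Evans potentials on parabolic manifolds is a delicate matter, not bookkeeping. What is actually needed is the Li--Tam construction for ends with compact boundary, and its difficulty is visible even in the natural attempt: choosing $R_i$ so fast that $\sigma_{R_i}\leq 2^{-i}$ on $E(R_{i-1})$ and summing the extensions of the $\sigma_{R_i}$ by $1$ outside $E(R_i)$ does produce a positive \emph{superharmonic} (for $\Delta_\varphi$) proper function vanishing on $\partial E$, but it is not $\varphi$-harmonic, and converting it into a $\varphi$-harmonic function while retaining both positivity and properness is precisely the step where all the work lies. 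Since this construction is the entire nontrivial content of the proposition, the proposal as written is incomplete.
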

		
		\begin{proposition}\label{defuvarphi}
			If $M$ has two $\varphi$-non-parabolic ends $E_{1}$ and $E_{2}$, then there is a $\varphi$-harmonic function $u$ satisfying $0<u< 1$ on $M$, $\inf_{E_1}u=0$, $\sup_{E_2}u=1$ and
			\begin{equation*}
				\int\limits_M|\nabla u|^2e^{-\varphi}<\infty.
			\end{equation*}
		\end{proposition}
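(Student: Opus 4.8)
The plan is to reproduce, in the weighted setting, the classical Li--Tam construction of a bounded harmonic function separating two non-parabolic ends. Since $\varphi=-af$ is smooth, the operator $\Delta_\varphi$ is a second-order uniformly elliptic operator with smooth coefficients, self-adjoint with respect to the measure $e^{-\varphi}\,dV$, and its associated energy is the weighted Dirichlet integral $\int|\nabla v|^2 e^{-\varphi}$. First I would fix a compact smooth domain $D\subset M$ such that $E_1$ and $E_2$ are two of the unbounded connected components of $M\setminus D$, and choose a smooth exhaustion $D\subset\Omega_1\subset\Omega_2\subset\cdots$ with $\bigcup_i\Omega_i=M$. On each $\Omega_i$ I would solve the Dirichlet problem
\begin{equation*}
\Delta_\varphi u_i=0 \text{ in } \Omega_i, \qquad u_i=0 \text{ on } \partial\Omega_i\cap E_1, \qquad u_i=1 \text{ on } \partial\Omega_i\setminus E_1,
\end{equation*}
whose solution exists and is unique by standard elliptic theory, and the maximum principle for $\Delta_\varphi$ gives $0<u_i<1$ in the interior.

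The next step is to pass to the limit. Because $0\le u_i\le1$ and the coefficients of $\Delta_\varphi$ are smooth, interior Schauder estimates bound the $u_i$ uniformly in $C^{2,\alpha}$ on compact subsets; a diagonal subsequence then converges in $C^2_{loc}$ to a $\varphi$-harmonic function $u$ with $0\le u\le1$ on $M$, and the strong maximum principle forces either $u$ constant or $0<u<1$. To control the energy I would establish a uniform bound $\int_{\Omega_i}|\nabla u_i|^2 e^{-\varphi}\le C$ independent of $i$: since $u_i$ minimizes the weighted Dirichlet integral among functions with its boundary values, this energy equals a capacity between the two boundary portions inside $\Omega_i$, and $\varphi$-non-parabolicity of $E_1$ and $E_2$ is precisely the statement that the corresponding capacities to infinity are finite, so comparing $u_i$ with the equilibrium potentials supplied by the Green's functions of the two ends yields the bound. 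Lower semicontinuity of the Dirichlet integral under $C^2_{loc}$ convergence then gives $\int_M|\nabla u|^2 e^{-\varphi}<\infty$.

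Finally I would rule out that $u$ is constant and pin down its boundary behavior, which is where the hypothesis that \emph{both} ends are non-parabolic is essential. If $u$ were identically $1$, then on $E_1$ the functions $u_i$ would have to drop from values near $1$ on $\partial E_1$ to $0$ on the receding outer boundary $\partial\Omega_i\cap E_1$, forcing the weighted capacity of $E_1$ to infinity to vanish and contradicting its non-parabolicity; symmetrically $u\equiv0$ is excluded using $E_2$. Hence $0<u<1$, and a more careful comparison on each end --- again using the finite positive capacity guaranteed by non-parabolicity --- shows that $u$ decays to its infimum $0$ along $E_1$ and rises to its supremum $1$ along $E_2$, giving $\inf_{E_1}u=0$ and $\sup_{E_2}u=1$. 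The main obstacle is this last pair of steps: both the uniform energy bound and the identification of the asymptotic values rest on quantitatively exploiting $\varphi$-non-parabolicity through the Green's functions of the ends, and it is exactly here that a parabolic end would cause the construction to degenerate into a constant.
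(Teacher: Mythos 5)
Your proposal is correct and takes essentially the same route the paper relies on: the paper does not prove Proposition \ref{defuvarphi} itself but quotes it from the weighted Li--Tam theory of Munteanu--Wang (Theorem 21.3 of \cite{pLi} in the unweighted case), and the construction it invokes in the proof of Lemma \ref{lem1catend} --- $\varphi$-harmonic functions $u_i$ on an exhaustion by geodesic balls, with boundary values $0$ on $\partial B(x_0,r_i)\cap E_1$ and $1$ on $\partial B(x_0,r_i)\cap E_2$, passed to a limit with uniform weighted energy bounds --- is exactly the scheme you describe. One wording slip worth fixing: $\varphi$-non-parabolicity is equivalent to \emph{positive} (not merely finite) weighted capacity at infinity, which is how you in fact use it when excluding constant limits and identifying $\inf_{E_1}u=0$, $\sup_{E_2}u=1$.
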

		
		These propositions have been used in \cite{MuWa1,MuWa2,MuWa4,MuWa5} to investigate parabolicity issues regarding ends of gradient Ricci solitons, leading to a broad description of the topology at infinity of these manifolds. When the weight $\varphi$ is constant, Proposition \ref{thmfparachar}
		and Propostion \ref{defuvarphi} are related with Lemma 20.7 and Theorem 21.3 of \cite{pLi}, respectively. We are obviously interested on weights of the form $\varphi=-af$, where $f$ is the potential function, and $a$ is a constant.

		\section{$\varphi$-Non-parabolicity and ends}\label{phinonpar}
		
		In this section we prove Theorem \ref{nphinpendthm}, namely, that a shrinking $\rho$-Einstein soliton with $\rho\geq0$ and $0\leq R\leq K$ has only $\varphi$-non-parabolic ends. Following \cite{MuWa4,MuWa5}, we regard $M^n$ as smooth metric measure space $(M^n,g,e^{-\varphi}dV)$, whose weight is $\varphi=-af$, and $a>0$ is a fixed constant. The Bakry–Emery Ricci tensor associated with this weighted smooth metric measure space is given by $\mbox{Ric}_\varphi=\mbox{Ric}+\nabla\nabla\varphi$. Once $M$ is a $\rho$-Einstein soliton, we have
		$$\mbox{Ric}_\varphi=(\rho R+\lambda)g-(a+1)\nabla\nabla f.$$

		For $\rho=0$, Theorem \ref{nphinpendthm} was obtained by Munteanu and Wang in \cite{MuWa4} as an ingredient to prove the following result.
		\begin{theorem}[\cite{MuWa4}]\label{thmmw15}
			Let $(M^n,g,f,\lambda)$ be a gradient shrinking Kähler Ricci soliton. Then $(M^n,g)$ has only one end.
		\end{theorem}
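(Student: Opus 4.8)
The plan is to run the Li--Tam end-counting scheme, using the $\rho=0$ case of Theorem~\ref{nphinpendthm} as the analytic engine. First I would note that by Theorem~\ref{nphinpendthm} (applied with $\rho=0$, recalling that a non-trivial gradient shrinking Ricci soliton has $R>0$, so the hypotheses $0\le R\le K$ hold once one knows $R$ is bounded, as it is on these solitons) the manifold has \emph{no} $\varphi$-parabolic end. Hence it suffices to show that two $\varphi$-non-parabolic ends cannot coexist. Arguing by contradiction, suppose $M$ has two such ends $E_1,E_2$. Proposition~\ref{defuvarphi} then produces a bounded $\varphi$-harmonic function $u$ with $0<u<1$, $\inf_{E_1}u=0$, $\sup_{E_2}u=1$ and finite weighted energy $\int_M|\nabla u|^2e^{-\varphi}<\infty$. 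The objective is to prove that such a $u$ must be constant, contradicting $\inf_{E_1}u=0<1=\sup_{E_2}u$.

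The main analytic tool is the weighted Bochner formula. Since $\Delta_\varphi u=0$, one has
$$\tfrac12\,\Delta_\varphi|\nabla u|^2=|\nabla\nabla u|^2+\mathrm{Ric}_\varphi(\nabla u,\nabla u),$$
where, for the Ricci soliton, $\mathrm{Ric}_\varphi=\lambda g-(a+1)\nabla\nabla f$ (the $\rho=0$ specialization of the identity recorded in Section~\ref{phinonpar}). I would multiply this by a cutoff supported on sublevel sets of $f$ and integrate against $e^{-\varphi}$. The quadratic growth of $f$ (Cao--Zhou; cf.\ the discussion after Theorem~\ref{nphinpendthm}) together with the Hamilton identity pins down the geometry at infinity, and, combined with the finiteness of the weighted energy, lets the cutoff/boundary contributions be sent to zero. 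The delicate point is that $\mathrm{Ric}_\varphi$ is \emph{not} sign-definite, because of the $\nabla\nabla f$ term; so the integration must be organized so that, after integrating by parts and invoking the soliton identities of Proposition~\ref{catino}, the indefinite Hessian term is re-expressed in a controllable form.

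The decisive step, and the principal obstacle, is to exploit the K\"ahler hypothesis to upgrade the integrated Bochner inequality into the vanishing of $\nabla\nabla u$. On a K\"ahler soliton $\nabla f$ is a real holomorphic vector field, so the complex structure $J$ is compatible with the splitting of $\nabla\nabla u$ into its $J$-invariant and $J$-anti-invariant parts. This yields a sharper Kato-type control of $|\nabla\nabla u|^2$ against $|\nabla|\nabla u||^2$ than is available in the purely real setting, and it arranges the indefinite Hessian contributions so that they acquire a favorable sign. Pushing these estimates through is precisely where the argument departs from the real case and where the hard work lies; the outcome should be $\nabla\nabla u\equiv0$ (and $\mathrm{Ric}_\varphi(\nabla u,\nabla u)\equiv0$).

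Finally, the rigidity consequence closes the argument cleanly. A parallel gradient $\nabla u$ that is not identically zero forces, via the de Rham decomposition, an isometric splitting $M\cong\mathbb{R}\times N$ with $u$ affine in the $\mathbb{R}$-factor, hence unbounded; this contradicts $0<u<1$. Therefore $\nabla u\equiv0$, so $u$ is constant, contradicting the boundary behaviour $\inf_{E_1}u=0<1=\sup_{E_2}u$. Consequently $M$ cannot have two ends, and since no $\varphi$-parabolic end exists, $M$ has exactly one end.
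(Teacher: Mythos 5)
Your two-phase architecture is exactly the one the paper attributes to Munteanu--Wang: first show every end is $\varphi$-non-parabolic (Steps 1--4 of the outline in Section \ref{phinonpar}), then use the K\"ahler structure to exclude two $\varphi$-non-parabolic ends (Step 5). In the first phase there is a flaw in your justification: you invoke Theorem \ref{nphinpendthm} at $\rho=0$ by asserting that the scalar curvature of a gradient shrinking Ricci soliton is bounded. That is not a known fact; Hamilton's identity together with the Cao--Zhou estimate only gives $R\leq 2\lambda f+c$, i.e.\ at most quadratic growth. The correct justification, stated explicitly in the paper, is that for $\rho=0$ the quadratic growth of the potential from \cite{caozhou} requires no hypothesis on $R$, so the conclusion of Theorem \ref{nphinpendthm} holds for shrinking Ricci solitons with no curvature assumption at all. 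This is repairable, so it is the lesser issue.

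The genuine gap is Step 5, which you outline but do not prove --- and it is the entire content of the theorem beyond what holds for general Riemannian shrinkers. Your proposed mechanism (weighted Bochner formula plus a ``sharper Kato-type inequality'' that gives the indefinite Hessian term a favorable sign, yielding $\nabla\nabla u\equiv 0$ and then a de Rham splitting) is asserted, not derived, and there is a concrete obstruction to completing it in the form described: the cylinder $\mathbb{R}\times\mathbb{S}^{n-1}$ is a gradient shrinking Ricci soliton with two ends \cite{QW}, both $\varphi$-non-parabolic by Theorem \ref{nphinpendthm}, and its Li--Tam function $u$ (an explicit function of the line coordinate) satisfies every real-geometric ingredient you use --- Proposition \ref{defuvarphi}, finite weighted energy, the soliton identities, quadratic growth of $f$ --- yet it is bounded, non-constant, and has non-vanishing Hessian. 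Hence no refinement of the real Bochner--Kato estimates can produce $\nabla\nabla u\equiv0$; any proof must consume a specifically K\"ahler identity. The input actually used by Munteanu--Wang in \cite{MuWa4} is that on a K\"ahler Ricci soliton the Hessian of $f$ is $J$-invariant (equivalently $\nabla f$ is real holomorphic and $J\nabla f$ is Killing), and this is fed into delicate integral identities for $u$; your proposal compresses all of this into the sentence that ``this is where the hard work lies.'' As written, the proposal is a restatement of the known strategy with its decisive step missing, so it does not constitute a proof.
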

		
		Since the proof of Theorem \ref{nphinpendthm} is based on Munteanu and Wang's proof of Theorem \ref{thmmw15} in \cite{MuWa4}, it is convenient to recall the main steps of that proof. In the description below, one is taking $\lambda=1/2$ and assuming $M$ is not connected at infinity. The proof can be divided into the following five steps:
		\begin{itemize}
			\item \textbf{Step 1.} By assuming the existence of a $\varphi$-parabolic end $E$, it is proved a $\varphi$-harmonic function $h$ satisfying the properties of Proposition \ref{thmfparachar} satisfies the following integral estimate
			\begin{equation}\label{step1ineqmw}
				\int\limits_{B_x(1)}|\nabla \ln h|^2\leq C_0e^{-\frac{a}{4}r(x)^2+cr(x)}.
			\end{equation}
			\item \textbf{Step 2.} Setting $v=\ln{h}$ and $\sigma=|\nabla v|^2$  with the aim of {transforming} (\ref{step1ineqmw}) into a point-wise inequality, an integral inequality in terms only of $\sigma$, $\nabla \sigma$ and $f$ is proved, namely
			\begin{equation}\label{step2ineq}
				\begin{split}
					({p}-{2})\int\limits_M\sigma^{p-2}|\nabla\sigma|^2\phi^2\leq&\ C(n)(a+1)^2p\int\limits_Mf\sigma^p\phi^2
					+C(n)(a+1)\int\limits_M\sigma^p|\nabla\phi|^2\\
				\end{split}
			\end{equation}
			for $p$ large enough, a constant $C(n)$ independent of $p$ and for any cut-off function $\phi$ with support on the unit ball $B_{x_0}(1)$, for a fixed point $x_0\in E$.
			\item \textbf{Step 3.} Combining (\ref{step2ineq}) with the Sobolev Inequality obtained in \cite{MuWa1}, Nash-Moser theory is applied to conclude $\sigma$ satisfies the mean value inequality
			$$\sigma(x)\leq Ce^{c(n)r(x)}\int\limits_{B_x(1)}\sigma,$$
			where $r(x)=d(x_0,x)$. 
			\item \textbf{Step 4.} The integral inequality found in {\it Step 3} is combined with \ref{step1ineqmw} to conclude $h$ must be bounded, which is a contradiction, implying that all ends of $M$ must be $\varphi$-non-parabolic.
			\item \textbf{Step 5.} The Kähler geometry of $M$ is used to show that the assumption of more than one $\varphi$-non-parabolic end leads to a contradiction, concluding $M$ must be connected at infinity. 
		\end{itemize}
		
		It is worth mentioning that being Kähler in Theorem \ref{thmmw15} is used only on the last step to conclude connectedness at infinity and that the first four steps are still valid for a Riemannian shrinking gradient Ricci soliton. Thus, the argument proves Theorem \ref{nphinpendthm} when $\rho=0$, without any assumption on the scalar curvature.
		
		Consequently, if Steps 1 to 4 are true for a shrinking gradient $\rho$-Einstein soliton with non-negative bounded scalar curvature, then all its ends are $\varphi$-non-parabolic. We notice that Step 1 and Step 2 are strongly based on the Ricci soliton structure. Thus, it is not obvious that these steps will hold true in any other configuration. We show that both steps are true under the conditions of Theorem \ref{nphinpendthm}. Starting with inequality \eqref{step2ineq}, Step 3 is guaranteed whenever a suitable Sobolev Inequality is available. By the results of \cite{MuWa1,MuWa3}, as observed in \cite{MuWa4}, this is the case when $(M^n,g,e^{-f}dV)$ is a smooth metric measure space with nonnegative Bakry-Emery Ricci curvature. This is exactly the case on a shrinking gradient $\rho$-Einstein soliton with bounded and nonnegative scalar curvature. Finally, step 4 can be performed once we have Step 1 and Step 3 established.
		
		\begin{proof}[Proof of Theorem \ref{nphinpendthm}]
			Suppose by contradiction that $E$ is a $\varphi$-parabolic end of $M$. Then, from Proposition \ref{thmfparachar}, there exists a proper $\varphi$-harmonic function $h$ on the end such that,
			\begin{equation}\label{hconditions}
				\begin{split}
					h\geq 1 \mbox{ on } E, \ h=1 \mbox{ on } \partial E,\ \lim\limits_{x\to E(\infty)} h(x)=\infty, \mbox{ and }\ \Delta_\varphi h=0.
				\end{split}
			\end{equation}
			Our goal is to show that (\ref{hconditions}) leads to a contradiction, which proves the theorem.
			
			For $t>1$ and $1<b<c$, we define the sets
			\begin{equation*}
				l(t):=\{x\in E: h(x)=t\}\ \ \ \text{and}\ \ \ L(b,c):=\{x\in E: b<h(x)<c\}.
			\end{equation*}
			From (\ref{hconditions}) we know $l(t)$ and $\overline{L(b,c)}$ are compact. Since on the level set $h=t$ we have the unit normal $\nu=\nabla h/|\nabla h|$ we get by the divergence theorem that
			\begin{equation*}
				\begin{split}
					0=\int\limits_{L(b,c)}(\Delta_\varphi h)e^{-\varphi} =\int\limits_{\partial L(b,c)}(\partial_\nu h)e^{-\varphi}=\int\limits_{\partial L(b,c)}\left\langle\frac{\nabla h}{|\nabla h|},\nabla h\right\rangle e^{-\varphi}=\int\limits_{l(c)}|\nabla h|e^{-\varphi}-\int\limits_{l(b)}|\nabla h|e^{-\varphi},
				\end{split}
			\end{equation*}
			which implies the weighted integral of $|\nabla h|$ over $l(t)$ is independent of $t$. Consequently,
			\begin{equation*}
				\begin{split}
					\int\limits_{E}|\nabla \ln h|^2e^{-\varphi}= \int\limits_{L(1,\infty)}\frac{|\nabla h|^2}{h^2}e^{-\varphi}= \int\limits_1^\infty\frac{1}{t^2}dt\int\limits_{l(t_0)}|\nabla h|e^{-\varphi}=C_0<\infty.
				\end{split}
			\end{equation*}
			where we have used the co-area formula. Now, for any $x\in E$ with $B_x(1)\subset E$, we have 
			\begin{equation}\label{minephibx1}
				\left(\min\limits_{x\in B_x(1)}e^{-\varphi}\right)\int\limits_{B_x(1)}|\nabla \ln h|^2e^{-\varphi}\leq\int\limits_{B_x(1)}|\nabla \ln h|^2e^{-\varphi}e^{-\varphi}\leq C_0.
			\end{equation}
			If $e^{-\varphi}$ attains its minimum over $\overline{B_x(1)}$ at $\Tilde{x}$, Proposition \ref{prop42ineqfd} assures that
			$$\min\limits_{x\in \overline{B_x(1)}}e^{-\varphi}=e^{af(\Tilde{x})}\geq e^{a(\frac{1}{4}r(\Tilde{x})-\alpha)^2}\geq e^{\frac{a}{16}r(\Tilde{x})^2-\frac{a\alpha}{2}r(\Tilde{x})}$$
			where $r(x)=d(x,x_0)$, for a fixed point $x_0\in M$. Once $r(x)+1\geq r(\Tilde{x})\geq r(x)-1$ for $x\in B_x(1)$, we have
			$$\min\limits_{x\in B_x(1)}e^{-\varphi}\geq e^{\frac{a}{16}(r(x)-1)^2-\frac{a\alpha}{2}(r(x)+1)},$$
			which together with (\ref{minephibx1}) implies
			\begin{equation}\label{ineqlnhexp}
				\int\limits_{B_x(1)}|\nabla \ln h|^2\leq C_0e^{-\frac{a}{16}(r(x)-1)^2+\frac{a\alpha}{2}(r(x)+1)}.
			\end{equation}
			{Notice equation (\ref{ineqlnhexp}) is analogous to that found in Step 1 of the proof of Theorem \ref{thmmw15}.}
			
			We now work on showing the function $\sigma:=|\nabla\ln{h}|^2$ satisfies an inequality analogous to (\ref{step2ineq}). Setting $v:=\ln{h}$,	then $\nabla v=\nabla h/h$, and, since $h$ is $\varphi$-harmonic,
			\begin{equation}\label{eq26lap}
				\begin{split}
					\Delta v=& -a\langle\nabla v,\nabla f\rangle -|\nabla v|^2.
				\end{split}
			\end{equation}
			By applying the Bochner formula on $v$, and the $\rho$-Einstein equation, we get
			\begin{equation}\label{eqboch0}
				\begin{split}
					\frac{1}{2}\Delta|\nabla v|^2
					=&|\nabla\nabla v|^2-a\langle\nabla\left\langle\nabla v,\nabla f\rangle,\nabla v\right\rangle-\langle\nabla|\nabla v|^2,\nabla v\rangle+ (\rho R+\lambda)|\nabla v|^2-\nabla\nabla f(\nabla v,\nabla v).\\
				\end{split}
			\end{equation}
			Notice from Theorem \ref{cor33RGES} and Young's inequality,
			\begin{equation}\label{ineqbochv2}
				\begin{split}
					a\nabla\nabla v(\nabla f,\nabla v)
					\leq\frac{1}{2}|\nabla\nabla v|^2+\frac{a^2}{2}|\nabla f|^2|\nabla v|^2\leq\frac{1}{2}|\nabla\nabla v|^2+\frac{a^2(\delta f+\epsilon)}{2}|\nabla v|^2.
				\end{split}
			\end{equation}
			On the other hand, from Schwarz inequality, (\ref{eq26lap}) and Young's inequality, 
			\begin{equation}\label{ineqbochv3}
				\begin{split}
					|\nabla\nabla v|^2\geq&\frac{(\Delta v)^2}{n}=\frac{1}{n}\left(a\langle\nabla v,\nabla f\rangle +|\nabla v|^2\right)^2=\frac{1}{n}\left(a^2\langle\nabla v,\nabla f\rangle^2+2a\langle\nabla v,\nabla f\rangle|\nabla v|^2 +|\nabla v|^4\right)\\
					\geq&{\ }\frac{1}{n}\left(|\nabla v|^4-\frac{1}{2}(2a|\nabla v||\nabla f|)^2-\frac{1}{2}|\nabla v|^4\right)=\frac{1}{n}\left(\frac{1}{2}|\nabla v|^4-2a^2|\nabla v|^2|\nabla f|^2\right)\\
					\geq&{\ }\frac{1}{2n}|\nabla v|^4-a^2|\nabla v|^2|\nabla f|^2\geq\frac{1}{2n}|\nabla v|^4-a^2(\delta f+\epsilon)|\nabla v|^2.    
				\end{split}
			\end{equation}
			By plugging (\ref{ineqbochv2}) and (\ref{ineqbochv3}) into (\ref{eqboch0}) we get
			\begin{equation}\label{eqboch1}
				\begin{split}
					\frac{1}{2}\Delta|\nabla v|^2
					=&|\nabla\nabla v|^2-a\left(\nabla\nabla f(\nabla v,\nabla v)+\nabla\nabla v(\nabla f,\nabla v)\right)-\langle\nabla|\nabla v|^2,\nabla v\rangle \\
					&+ (\rho R+\lambda)|\nabla v|^2-\nabla\nabla f(\nabla v,\nabla v)\\
					\geq&|\nabla\nabla v|^2-a\nabla\nabla f(\nabla v,\nabla v)-\frac{1}{2}|\nabla\nabla v|^2-\frac{a^2(\delta f+\epsilon)}{2}|\nabla v|^2-\langle\nabla|\nabla v|^2,\nabla v\rangle \\
					&+ (\rho R+\lambda)|\nabla v|^2-\nabla\nabla f(\nabla v,\nabla v)\\
					=&\frac{1}{2}|\nabla\nabla v|^2-\langle\nabla|\nabla v|^2,\nabla v\rangle+ \left(\rho R+\lambda-\frac{a^2(\delta f+\epsilon)}{2}\right)|\nabla v|^2-(a+1)\nabla\nabla f(\nabla v,\nabla v)\\
					\geq&\frac{1}{2}\left(\frac{1}{2n}|\nabla v|^4-a^2(\delta f+\epsilon)|\nabla v|^2\right)-\langle\nabla|\nabla v|^2,\nabla v\rangle+\left(\rho R+\lambda-\frac{a^2(\delta f+\epsilon)}{2}\right)|\nabla v|^2\\
					&-(a+1)\nabla\nabla f(\nabla v,\nabla v)\\
					=&\frac{1}{4n}|\nabla v|^4-\langle\nabla|\nabla v|^2,\nabla v\rangle+\left(\rho R+\lambda-a^2(\delta f+\epsilon)\right)|\nabla v|^2-(a+1)\nabla\nabla f(\nabla v,\nabla v)
				\end{split}
			\end{equation}
			By making $\sigma=|\nabla v|^2$, inequality (\ref{eqboch1}) can be rewritten using Einstein summation for the term $\nabla\nabla f(\nabla v,\nabla v)$ as
			\begin{equation}\label{ineq27sigma}
				\sigma^2\leq4n\left(a^2(\delta f+\epsilon)-\rho R-\lambda\right)\sigma+4n\langle\nabla\sigma,\nabla v\rangle+4n(a+1)f_{ij}v_iv_j+2n\Delta\sigma.
			\end{equation}
			
			Consider $\phi$ a cut-off function over $M$ and {$p>0$ large enough depending only on $n$}. Then, multiplying inequality (\ref{ineq27sigma}) by $\sigma^{p-1}\phi^2$, and integrating over $M$ we get
			\begin{equation}\label{ineq28sigma}
				\begin{split}
					\int\limits_M\sigma^{p+1}\phi^2\leq&\ 4n\int\limits_M\left(\rho R+\lambda-a^2(\delta f+\epsilon)\right)\sigma^p\phi^2 +4n\int\limits_M\langle\nabla\sigma,\nabla v\rangle\sigma^{p-1}\phi^2\\ &+4n(a+1)\int\limits_Mf_{ij}v_iv_j\sigma^{p-1}\phi^2 +2n\int\limits_M\sigma^{p-1}(\Delta\sigma)\phi^2.
				\end{split}
			\end{equation}
			Now, we proceed to bound the right side of (\ref{ineq28sigma}) in terms of only $f$, $\sigma$, $\phi$ and $\nabla\phi$. 
			
			By noticing that
			$$\int\limits_M (\sigma^{p}\phi^2)\Delta v =-\int\limits_M\langle\nabla(\sigma^{p}\phi^2),\nabla v\rangle =-\int\limits_M\langle\nabla\sigma^{p},\nabla v\rangle\phi^2 -\int\limits_M\langle\nabla\phi^2,\nabla v\rangle\sigma^{p},$$
			we get that the integral on the second term on the right-hand side of (\ref{ineq28sigma}) can be bounded using Young's inequality, (\ref{eq26lap}) and (\ref{ineqbochv2}) as follows,
			\begin{equation}\label{ineq29sigma}
				\begin{split}
					\int\limits_M\langle\nabla\sigma,\nabla v\rangle\sigma^{p-1}\phi^2=& \frac{1}{p}\int\limits_M\langle\nabla\sigma^p,\nabla v\rangle\phi^2=
					-\frac{1}{p}\int\limits_M (\sigma^{p}\phi^2)\Delta v-\frac{1}{p}\int\limits_M\langle\nabla\phi^2,\nabla v\rangle\sigma^{p}\\
					=&\frac{1}{p}\int\limits_M \sigma^{p}\left(a\langle\nabla v,\nabla f\rangle +\sigma\right)\phi^2-\frac{1}{p}\int\limits_M\langle\nabla\phi^2,\nabla v\rangle\sigma^{p}\\
					\leq& \frac{a}{p}\int\limits_M \sigma^{p}\langle\nabla v,\nabla f\rangle \phi^2+\frac{1}{p}\int\limits_M \sigma^{p+1}\phi^2 -\frac{2}{p}\int\limits_M\phi\langle\nabla\phi,\nabla v\rangle\sigma^{p}\\
					\leq& \frac{a}{p}\int\limits_M \sigma^{p}|\nabla v||\nabla f| \phi^2+\frac{1}{p}\int\limits_M \sigma^{p+1}\phi^2 +\frac{2}{p}\int\limits_M|\nabla\phi||\nabla v|\sigma^{p}\phi\\
					\leq&\frac{a^2}{2p}\int\limits_M \sigma^{p}|\nabla f|^2 \phi^2 +\frac{1}{2p}\int\limits_M \sigma^{p+1}\phi^2 +\frac{1}{p}\int\limits_M \sigma^{p+1}\phi^2 +\frac{2}{p}\int\limits_M|\nabla\phi|^2\sigma^{p}\\ &+\frac{1}{2p}\int\limits_M\phi^2\sigma^{p+1}\\
					\leq&\frac{a^2}{2p}\int\limits_M \sigma^{p}(\delta f+\epsilon) \phi^2 +\frac{2}{p}\int\limits_M \sigma^{p+1}\phi^2 +\frac{2}{p}\int\limits_M|\nabla\phi|^2\sigma^{p}. 
				\end{split}
			\end{equation}
			Regarding the last term on the right side of (\ref{ineq28sigma}), by integrating by parts we get
			\begin{equation*}
				\begin{split}
					\int\limits_M\sigma^{p-1}(\Delta\sigma)\phi^2=-(p-1)\int\limits_M\sigma^{p-2}\langle\nabla\sigma,\nabla\sigma\rangle\phi^2-2\int\limits_M\sigma^{p-1}\langle\nabla\phi,\nabla\sigma\rangle\phi,
				\end{split}
			\end{equation*}
			second term of the right-hand side of the equation above is bounded by
			$$-2\int\limits_M\sigma^{p-1}\langle\nabla\phi,\nabla\sigma\rangle\phi\leq2\int\limits_M\sigma^{p-2}|\nabla\phi||\nabla\sigma|\sigma\phi\leq\int\limits_M\sigma^{p-2}\left(|\nabla\sigma|^2\phi^2+|\nabla\phi|^2\sigma^2\right).$$
			Thus, {for $p$ big enough} 
			\begin{equation}\label{ineq210sigma}
				\begin{split}
					\int\limits_M\sigma^{p-1}(\Delta\sigma)\phi^2\leq&-(p-2)\int\limits_M\sigma^{p-2}|\nabla\sigma|^2\phi^2+\int\limits_M\sigma^{p}|\nabla\phi|^2\\
					\leq&-\frac{p}{2}\int\limits_M\sigma^{p-2}|\nabla\sigma|^2\phi^2+\int\limits_M\sigma^{p}|\nabla\phi|^2.
				\end{split}
			\end{equation}
			
			Now we find an estimate for the third term on the right side of (\ref{ineq28sigma}). First, notice that for, any $j$, 
			\begin{equation*}
				\begin{split}
					\left(f_iv_iv_j\sigma^{p-1}\phi^2\right)_j=&\ f_{ij}v_iv_j\sigma^{p-1}\phi^2 +f_iv_{ij}v_j\sigma^{p-1}\phi^2+ f_iv_iv_{jj}\sigma^{p-1}\phi^2\\
					&+(p-1)f_iv_iv_j\sigma^{p-2}\sigma_j\phi^2+2f_iv_iv_j\sigma^{p-1}\phi\phi_j.
				\end{split}
			\end{equation*}
			Thus, by the divergence theorem, we have
			\begin{equation}\label{eq211sigma}
				\begin{split}
					\int\limits_Mf_{ij}v_iv_j\sigma^{p-1}\phi^2=&-\int\limits_Mf_iv_{ij}v_j\sigma^{p-1}\phi^2-\int\limits_Mf_iv_iv_{jj}\sigma^{p-1}\phi^2\\
					&-(p-1)\int\limits_Mf_iv_iv_j\sigma_j\sigma^{p-2}\phi^2-2\int\limits_Mf_iv_iv_j\sigma^{p-1}\phi\phi_j.
				\end{split}
			\end{equation}
			Each term {in the equation} above can be estimated as follows. For the first term, we apply Green's identity getting
			\begin{equation*}
				\begin{split}
					-\int\limits_Mf_iv_{ij}v_j\sigma^{p-1}\phi^2
					={}&-\frac{1}{2p}\left(\int\limits_M\langle\nabla(\sigma^p\phi^2),\nabla f\rangle - \int\limits_M\langle\nabla\phi^2\nabla f\rangle\sigma^p\right)\\
					={}&\frac{1}{2p}\int\limits_M(\Delta f)\sigma^p\phi^2+\frac{1}{2p}\int\limits_M\langle\nabla\phi^2\nabla f\rangle\sigma^p\\
					\leq{}&\frac{1}{2p}\int\limits_M(\Delta f)\sigma^p\phi^2+\frac{1}{p}\int\limits_M|\nabla\phi||\nabla f|\phi\sigma^p\\
					\leq{}& \frac{1}{2p}\int\limits_M(\Delta f)\sigma^p\phi^2+\int\limits_M|\nabla\phi|^2\sigma^p+\int\limits_M(\epsilon f+\delta)\phi^2\sigma^p.
				\end{split}
			\end{equation*}
			For the second term, we have
			\begin{equation*}
				\begin{split}
					-\int\limits_Mf_iv_iv_{jj}\sigma^{p-1}\phi^2=&-\int\limits_M\langle\nabla v,\nabla f\rangle(\Delta v)\sigma^{p-1}\phi^2=\ a\int\limits_M\langle\nabla v,\nabla f\rangle^2\sigma^{p-1}\phi^2 +\int\limits_M\langle\nabla v,\nabla f\rangle\sigma^{p}\phi^2\\
					\leq&\ a\int\limits_M|\nabla f|^2\sigma^{p}\phi^2 + \int\limits_M|\nabla v||\nabla f|\sigma^{p}\phi^2\\
					\leq&\ a\int\limits_M(\delta f+\epsilon)\sigma^{p}\phi^2+ \int\limits_M\left(\frac{np(a+1)}{2}|\nabla f|^2+\frac{1}{2np(a+1)}|\nabla v|^2\right)\sigma^{p}\phi^2\\
					\leq&\ np(a+1)\int\limits_M(\delta f+\epsilon)\sigma^{p}\phi^2+ \frac{1}{np(a+1)}\int\limits_M\sigma^{p+1}\phi^2.
				\end{split}
			\end{equation*}
			Third term on (\ref{eq211sigma}) can be bounded by
			\begin{equation*}
				\begin{split}
					-(p-1)\int\limits_Mf_iv_iv_j\sigma_j\sigma^{p-2}\phi^2\leq& p\int\limits_M|\nabla f|\sigma|\nabla\sigma|\sigma^{p-2}\phi^2 \\
					\leq& p\int\limits_M\left({2n(a+1)}|\nabla f|^2\sigma^2 +\frac{|\nabla\sigma|^2}{8n(a+1)}\right)\sigma^{p-2}\phi^2\\
					\leq& {2pn(a+1)}\int\limits_M(\delta f +\epsilon)\sigma^p\phi^2+\frac{p}{8n(a+1)}\int\limits_M|\nabla\sigma|^2\sigma^{p-2}\phi^2.
				\end{split}
			\end{equation*}
			
			Finally, we can estimate the last term on the right-hand side of (\ref{eq211sigma}) by
			$$-2\int\limits_Mf_iv_iv_j\sigma^{p-1}\phi\phi_j\leq 2\int\limits_M\left(\sigma|\nabla f||\nabla\phi|\phi\right)\sigma^{p-1}\leq\int\limits_M(\delta f+\epsilon)\sigma^p\phi^2+\int\limits_M|\nabla\phi|^2\sigma^{p}.$$
			Plugging these four estimates in (\ref{eq211sigma}), we get
			\begin{equation*}
				\begin{split}
					\int\limits_Mf_{ij}v_iv_j\sigma^{p-1}\phi^2\leq&{\frac{1}{2p}\int\limits_M(\Delta f)\sigma^p\phi^2+\int\limits_M|\nabla\phi|^2\sigma^p+\int\limits_M(\epsilon f+\delta)\phi^2\sigma^p}+np(a+1)\int\limits_M(\delta f+\epsilon)\sigma^{p}\phi^2\\
					&+ \frac{1}{np(a+1)}\int\limits_M\sigma^{p+1}\phi^2+{2pn(a+1)}\int\limits_M(\delta f +\epsilon)\sigma^p\phi^2\\
					&+\frac{p}{8n(a+1)}\int\limits_M|\nabla\sigma|^2\sigma^{p-2}\phi^2
					+{\int\limits_M(\delta f+\epsilon)\sigma^p\phi^2+\int\limits_M|\nabla\phi|^2\sigma^{p}},
				\end{split}
			\end{equation*}
			which implies
			\begin{equation}\label{eq212sigma}
				\begin{split}
					4n(a+1)\int\limits_Mf_{ij}v_iv_j\sigma^{p-1}\phi^2\leq&\ \frac{2n(a+1)}{p}\int\limits_M(\Delta f)\sigma^p\phi^2+4n(a+1)\int\limits_M|\nabla\phi|^2\sigma^p\\
					&+4n(a+1)\int\limits_M(\epsilon f+\delta)\phi^2\sigma^p+4n^2(a+1)^2p\int\limits_M(\delta f+\epsilon)\sigma^{p}\phi^2\\
					&+\frac{4}{p}\int\limits_M\sigma^{p+1}\phi^2+{8n^2(a+1)^2p}\int\limits_M(\delta f +\epsilon)\sigma^p\phi^2+\frac{p}{2}\int\limits_M|\nabla\sigma|^2\sigma^{p-2}\phi^2\\
					&+{4n(a+1)\int\limits_M(\delta f+\epsilon)\sigma^p\phi^2+4n(a+1)\int\limits_M\sigma^p|\nabla\phi|^2}\\
					\leq&\ {2(a+1)}\int\limits_M(\Delta f)\sigma^p\phi^2+C_1(n)(a+1)^2p\int\limits_M(\delta f+\epsilon)\sigma^{p}\phi^2\\
					&+\frac{4}{p}\int\limits_M\sigma^{p+1}\phi^2+\frac{p}{2}\int\limits_M|\nabla\sigma|^2\sigma^{p-2}\phi^2
					+C_2(n)(a+1)\int\limits_M\sigma^p|\nabla\phi|^2,
				\end{split}
			\end{equation}
			for $C_1(n)$ and $C_2(n)$ constants depending only on $n$. By plugging (\ref{ineq29sigma}), (\ref{ineq210sigma}) and (\ref{eq212sigma}) into (\ref{ineq28sigma}) we get 
			\begin{equation*}
				\begin{split}
					\int\limits_M\sigma^{p+1}\phi^2\leq&\ 4n\int\limits_M\left(a^2(\delta f+\epsilon)-\rho R-\lambda\right)\sigma^p\phi^2 +4n\left(\frac{a^2}{2p}\int\limits_M (\delta f+\epsilon)\sigma^{p} \phi^2+\frac{2}{p}\int\limits_M \sigma^{p+1}\phi^2\right.\\ 
					&\left.  +\frac{2}{p}\int\limits_M|\nabla\phi|^2\sigma^{p}\right)+\left(2(a+1)\int\limits_M(\Delta f)\sigma^p\phi^2+ C_1(n)(a+1)^2p\int\limits_M(\delta f+\epsilon)\sigma^{p}\phi^2\right.\\
					&\left.+\frac{4}{p}\int\limits_M\sigma^{p+1}\phi^2+\frac{p}{2}\int\limits_M|\nabla\sigma|^2\sigma^{p-2}\phi^2
					+C_2(n)(a+1)\int\limits_M\sigma^p|\nabla\phi|^2\right)\\
					&+2n\left(-\frac{p}{2}\int\limits_M\sigma^{p-2}|\nabla\sigma|^2\phi^2+\int\limits_M\sigma^{p}|\nabla\phi|^2\right).
				\end{split}
			\end{equation*}
			Thus, taking $p$ big enough such that $1-\frac{8n-4}{p}\geq 0$ we have
			\begin{equation*}
				\begin{split}
					0\leq&\left(1-\frac{8n}{p}-\frac{4}{p}\right)\int\limits_M\sigma^{p+1}\phi^2\\
					\leq&\ 4n\int\limits_M\left(a^2(\delta f+\epsilon)-\rho R-\lambda\right)\sigma^p\phi^2 +{2a^2}\int\limits_M (\delta f+\epsilon)\sigma^{p}\phi^2+ 8\int\limits_M|\nabla\phi|^2\sigma^{p}\\ 
					&+ C_1(n)(a+1)^2p\int\limits_M(\delta f+\epsilon)\sigma^{p}\phi^2+\frac{p}{2}\int\limits_M|\nabla\sigma|^2\sigma^{p-2}\phi^2\\
					&+C_2(n)(a+1)\int\limits_M\sigma^p|\nabla\phi|^2-{np}\int\limits_M\sigma^{p-2}|\nabla\sigma|^2\phi^2+2n\int\limits_M\sigma^{p}|\nabla\phi|^2,
				\end{split}
			\end{equation*}
			which implies
			\begin{equation}\label{auxineqlapf}
				\begin{split}
					\left({np}-\frac{p}{2}\right)\int\limits_M\sigma^{p-2}|\nabla\sigma|^2\phi^2\leq&\ 4n\int\limits_M\left(a^2(\delta f+\epsilon)-\rho R-\lambda\right)\sigma^p\phi^2+2(a+1)\int\limits_M(\Delta f)\sigma^p\phi^2\\ 
					& +{2a^2}\int\limits_M (\delta f+\epsilon)\sigma^{p} \phi^2+8\int\limits_M|\nabla\phi|^2\sigma^{p}+ C_1(n)(a+1)^2p\int\limits_M(\delta f+\epsilon)\sigma^{p}\phi^2\\ 
					&+C_2(n)(a+1)\int\limits_M|\nabla\phi|^2\sigma^p+2n\int\limits_M|\nabla\phi|^2\sigma^{p}.\\
				\end{split}
			\end{equation}
			Notice that taking the trace of the $\rho$-Einstein soliton equation and recalling $0\leq R\leq K$ and $0\leq\rho\leq1/2(n-1)$ we have
			\begin{equation*}
				\begin{split}
					2(a+1)\Delta f-4n(\rho R+\lambda)=2n(a+1)(\rho R+\lambda)-4n(\rho R+\lambda)\leq2n(a-1)C_3
				\end{split}
			\end{equation*}
			for some constant $C_3$ clearly independent of $p$; by plugging this into (\ref{auxineqlapf}) one gets
			\begin{equation*}
				\begin{split}
					\left({np}-\frac{p}{2}\right)\int\limits_M\sigma^{p-2}|\nabla\sigma|^2\phi^2\leq&\ 4na^2\int\limits_M\left(\delta f+\epsilon\right)\sigma^p\phi^2+2n(a-1)C_3\int\limits_M\sigma^p\phi^2\\ 
					& +{2a^2}\int\limits_M (\delta f+\epsilon)\sigma^{p} \phi^2+8\int\limits_M|\nabla\phi|^2\sigma^{p}+ C_1(n)(a+1)^2p\int\limits_M(\delta f+\epsilon)\sigma^{p}\phi^2\\ 
					&+C_2(n)(a+1)\int\limits_M|\nabla\phi|^2\sigma^p+2n\int\limits_M|\nabla\phi|^2\sigma^{p}.\\
				\end{split}
			\end{equation*}
			
			Finally, given $a+1\leq(a+1)^2$ and $a^2\leq(a+1)^2$ we enlarge some terms on the right and merge similar integral terms to conclude
			\begin{equation}\label{finalinequality}
				\begin{split}
					\frac{p}{2}\int\limits_M\sigma^{p-2}|\nabla\sigma|^2\phi^2\leq&\ C(n)(a+1)^2p\int\limits_M\left(\delta f+C\right)\sigma^p\phi^2
					+C(n)(a+1)\int\limits_M\sigma^p|\nabla\phi|^2\\
				\end{split}
			\end{equation}
			for some constants $c(n)$ and $C$ independent of $p$ and for any cut-off function $\phi$ with support on the unit ball $B_{x_0}(1)$.

			Expression (\ref{finalinequality}) is analogous to that found by Munteanu and Wang on their proof of Theorem \ref{thmmw15}, (see equation (\ref{step2ineq})). Furthermore, as $\rho R\geq0$, it is clear that $Ric_f\geq\lambda g$. Thus, we are on the conditions of Lemma 3.2 of \cite{MuWa1} and, by the arguments of the proof of Theorem 2.1 in \cite{MuWa4}, we can conclude 
			$$\sigma(x)\leq Ce^{c(n)r(x)}\int\limits_{B_x(1)}\sigma$$
			for any $x\in E$. Combined with (\ref{ineqlnhexp}) this implies
			\begin{equation}\label{normgradlnhleq}
				|\nabla \ln{h(x)}|\leq\sqrt{\sigma}\leq C_1e^{\left(c(n)\frac{r(x)}{2}-\frac{a}{32}(r(x)-1)^2+\frac{a\alpha}{4}(r(x)+1)\right)}=Ce^{\left(c_2r(x)-\frac{a}{32}r^2(x)\right)}   
			\end{equation}
			for constants $C$ and $c_2$ {that do not depend on} $r(x)$, which is \textbf{Step 3} for the $\rho$-Einstein solitons under our restrictions. 
			
			Now, let $\gamma:[0,\infty)\to M$ be an arbitrary normalized minimizing geodesic on $M$ with $\gamma(0)=x_0$, then, for $x=\gamma(t)\in\gamma$ we have  that $r(x)=t$, and from (\ref{normgradlnhleq})
			$$\int\limits_\gamma|\nabla\ln{h(x)}|dx=\int\limits_0^\infty|\nabla\ln{h(\gamma(t))}|dt\leq C\int\limits_0^\infty e^{\left(c_2t-\frac{a}{32}t^2\right)}dt\leq K_0$$
			for some finite positive constant $K_0$. Therefore, for an arbitrary $\Tilde{x}\in E$ we can take $\gamma$ as being the normalized minimizing geodesic from $x_0$ such that $\Tilde{x}=\gamma(t_0)$ for some $t_0>0$, and from inequality above we have
			\begin{equation*}
				\begin{split} 
					K_0\geq&\int\limits_0^{t_0}|\nabla\ln{h(\gamma(t))}|dt\\
					&\geq\ln{h(\Tilde{x})}-\ln{h(x_0)},
				\end{split}
			\end{equation*}
			and then 
			$$h(\Tilde{x})\leq e^{K_1}$$
			for some constant $K_1$ depending only on $x_0$, by making $\Tilde{x}\to E(\infty)$ we have $h(\Tilde{x})$ is bounded, contradicting (\ref{hconditions}). Thus, $M$ must have only $\varphi$-non-parabolic ends as wanted to prove.
			
		\end{proof}

		\section{A two-parameter inequality}\label{onephinpend}
		
		By Theorem \ref{nphinpendthm}, if $M$ is not connected at infinity, we may consider a $\varphi$-harmonic function satisfying the properties of Proposition \ref{defuvarphi}. The goal of this section is to prove the following key inequality for this function. In the next section we apply this inequality to prove Theorem \ref{thm1endscl}.

		\begin{proposition}\label{mainineqshr}
			Let $(M^n,g,f,\lambda)$ be a shrinking gradient $\rho$-Einstein soliton with $\rho\geq0$ and scalar curvature satisfying $0\leq R\leq K$, for some constant $K>0$. If $a>0$, $b<a$, $\varphi=-af$ and $u$ is a $\varphi$-harmonic function given as in Proposition \ref{defuvarphi}, then
			\begin{equation*}
				\begin{split}
					2\int\limits_M\vert Ric\vert^2\vert\nabla u\vert e^{bf}\leq&-((2b-a)A+1)\int\limits_M R\langle\nabla|\nabla u|,\nabla f\rangle e^{bf} \\
					&-\int\limits_M R\left((Ab+1)\Delta f-(aA+2)(\rho R+\lambda)\right)|\nabla u|e^{bf}\\
					&-(Ab^2+b)\int\limits_M R|\nabla f|^2|\nabla u|e^{bf}-(a+1)A\int\limits_M Ric(\nabla u, \nabla u)R|\nabla u|^{-1}e^{bf},
				\end{split}
			\end{equation*}
			where $A=1-2(n-1)\rho$.
		\end{proposition}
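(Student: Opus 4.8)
The plan is to turn the left-hand integrand $2|Ric|^2$ into an expression involving only the scalar curvature and its derivatives by means of the contracted identity \eqref{cat3}, and then to integrate by parts so that all derivatives are shifted off $R$ and onto the geometric weight $|\nabla u|e^{bf}$. Writing $A=1-2(n-1)\rho$, identity \eqref{cat3} reads $2|Ric|^2 = -A\Delta R + \langle\nabla R,\nabla f\rangle + 2R(\rho R+\lambda)$, so that
\begin{equation*}
2\int\limits_M|Ric|^2|\nabla u|e^{bf} = -A\int\limits_M(\Delta R)|\nabla u|e^{bf} + \int\limits_M\langle\nabla R,\nabla f\rangle|\nabla u|e^{bf} + 2\int\limits_M R(\rho R+\lambda)|\nabla u|e^{bf}.
\end{equation*}
The last term already supplies part of the $(\rho R+\lambda)$ contribution of the statement; the first two must be reorganized.

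First I would integrate by parts twice in the $\Delta R$ term, transferring the Laplacian onto the weight to get $-A\int_M R\,\Delta(|\nabla u|e^{bf})$, and then expand by the product rule,
\begin{equation*}
\Delta(|\nabla u|e^{bf}) = e^{bf}\Big(\Delta|\nabla u| + 2b\langle\nabla|\nabla u|,\nabla f\rangle + b|\nabla u|(b|\nabla f|^2+\Delta f)\Big).
\end{equation*}
Besides the $\Delta|\nabla u|$ piece, this produces exactly the $\langle\nabla|\nabla u|,\nabla f\rangle$, $|\nabla f|^2$, and $\Delta f$ terms carried with factors of $A$ and $b$. Integrating by parts once in the second term, $\int_M\langle\nabla R,\nabla f\rangle|\nabla u|e^{bf} = -\int_M R\,\operatorname{div}(|\nabla u|e^{bf}\nabla f)$, contributes the complementary $-\langle\nabla|\nabla u|,\nabla f\rangle$, $-b|\nabla f|^2$, and $-\Delta f$ terms; together these account for the coefficients $-(Ab^2+b)$ and $-(Ab+1)$, and for all but the $aA$ part of the $\langle\nabla|\nabla u|,\nabla f\rangle$ coefficient.

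The heart of the argument is the lower bound for $\Delta|\nabla u|$. Combining the refined Bochner formula with Kato's inequality $|\nabla\nabla u|^2\geq|\nabla|\nabla u||^2$ and the $\varphi$-harmonicity $\Delta u = \langle\nabla\varphi,\nabla u\rangle = -a\langle\nabla f,\nabla u\rangle$, I would obtain on $\{\nabla u\neq0\}$
\begin{equation*}
|\nabla u|\,\Delta|\nabla u| \geq \langle\nabla\Delta u,\nabla u\rangle + Ric(\nabla u,\nabla u) = -a\,\nabla\nabla f(\nabla u,\nabla u) - a|\nabla u|\langle\nabla f,\nabla|\nabla u|\rangle + Ric(\nabla u,\nabla u).
\end{equation*}
Inserting the soliton equation in the form $\nabla\nabla f(\nabla u,\nabla u) = (\rho R+\lambda)|\nabla u|^2 - Ric(\nabla u,\nabla u)$ and dividing by $|\nabla u|$ yields
\begin{equation*}
\Delta|\nabla u| \geq -a(\rho R+\lambda)|\nabla u| + (a+1)\frac{Ric(\nabla u,\nabla u)}{|\nabla u|} - a\langle\nabla f,\nabla|\nabla u|\rangle.
\end{equation*}
Since $R\geq0$ and $A\geq0$ (the case $\rho\leq 1/2(n-1)$, consistent with the range in Theorem \ref{thm1endscl}), the factor $-ARe^{bf}$ is nonpositive, so multiplying reverses the inequality and feeds into $-A\int_M R\,\Delta|\nabla u|\,e^{bf}$ precisely the terms $aA\,R(\rho R+\lambda)|\nabla u|$ (which merges with the $2R(\rho R+\lambda)$ term to produce the coefficient $aA+2$), the term $-(a+1)A\,Ric(\nabla u,\nabla u)R|\nabla u|^{-1}$, and the remaining $aA\langle\nabla|\nabla u|,\nabla f\rangle$ that completes the coefficient $-((2b-a)A+1)$. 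Collecting all contributions then gives the stated inequality.

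I expect the main obstacle to be the rigorous justification of the two integrations by parts, namely the vanishing of boundary terms and the absolute convergence of every integral involved. This I would handle by inserting cutoff functions supported on geodesic balls $B_{x_0}(r)$ and letting $r\to\infty$, the decay being furnished by the finiteness $\int_M|\nabla u|^2e^{af}<\infty$ from Proposition \ref{defuvarphi}, the quadratic growth of $f$ from Propositions \ref{cor33RGES} and \ref{prop42ineqfd}, and the bound $0\leq R\leq K$; the hypothesis $b<a$ is exactly what ensures the weight $e^{bf}$ is dominated by the Gaussian-type decay of $|\nabla u|^2e^{af}$, so that all terms are integrable and the limits exist. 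A secondary technical point is the non-smoothness of $|\nabla u|$ on the critical set of $u$, which I would treat by working with the regularization $\sqrt{|\nabla u|^2+\varepsilon}$ and sending $\varepsilon\to0$, or by restricting to $\{\nabla u\neq0\}$ and verifying that the critical set makes no contribution.
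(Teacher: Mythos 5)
Your pointwise computation is exactly the paper's: the same rearrangement of identity \eqref{cat3} into $2|\mbox{Ric}|^2=-A\Delta R+\langle\nabla R,\nabla f\rangle+2R(\rho R+\lambda)$, the same transfer of derivatives onto the weight $|\nabla u|e^{bf}$, the same Bochner--Kato--soliton lower bound $\Delta|\nabla u|\geq -a\langle\nabla|\nabla u|,\nabla f\rangle-a(\rho R+\lambda)|\nabla u|+(a+1)\mbox{Ric}(\nabla u,\nabla u)|\nabla u|^{-1}$, and your coefficients assemble correctly into the stated inequality. (You are also right, and more explicit than the paper, that multiplying this lower bound by $-AR$ preserves the direction only when $A\geq0$, i.e.\ $\rho\leq 1/2(n-1)$; the paper makes the same tacit restriction.) The genuine gap is in what you treat as a deferred technicality: the vanishing of the cut-off boundary terms is not a routine consequence of the tools you name, and it constitutes the bulk of the paper's Section \ref{onephinpend} (Lemmas \ref{lem1catend}--\ref{lem4catend}).

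Concretely, with a cut-off $\phi$ the discarded terms are $-A\int_M R\langle\nabla\phi^2,\nabla(|\nabla u|e^{bf})\rangle$, $A\int_M\langle\nabla R,\nabla\phi^2\rangle|\nabla u|e^{bf}$ and $-\int_M R\langle\nabla f,\nabla\phi^2\rangle|\nabla u|e^{bf}$. The first contains $\nabla|\nabla u|$, hence the Hessian $|u_{ij}|$; the second, via \eqref{cat2}, contains the Ricci tensor, which is \emph{not} assumed bounded (only the scalar curvature is). Making these vanish in the limit requires global bounds of the type $\int_M|u_{ij}|e^{bf}<\infty$ and $\int_M|\mbox{Ric}||\nabla u||\nabla f|e^{bf}<\infty$, neither of which follows from $\int_M|\nabla u|^2e^{af}<\infty$, quadratic growth of $f$, and $0\leq R\leq K$. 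The paper derives them through a chain of separate integration-by-parts arguments: $\int_M|\nabla u|^2e^{2bf}<\infty$ for every $b<a$ (Lemma \ref{lem1catend}, which needs the approximating sequence $u_i$ and, crucially, the lower bound $|\nabla f|^2\geq\alpha f-\beta$ of Proposition \ref{cor33RGES}, which you never invoke), then $\int_M|u_{ij}|^2e^{2bf}<\infty$ (Lemma \ref{lem2catend}, via Bochner and the soliton equation), then $\int_M|\mbox{Ric}|^2|\nabla u|^2e^{2bf}<\infty$ (Lemma \ref{lem3catend}, via \eqref{cat3} again), and finally the $L^1$ versions (Lemma \ref{lem4catend}). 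Moreover, your claim that $b<a$ plus Cauchy--Schwarz against Proposition \ref{defuvarphi} dominates the weights is only true for $b<a/2$: the estimate $\int_M|\nabla u|e^{bf}\leq(\int_M|\nabla u|^2e^{af})^{1/2}(\int_M e^{(2b-a)f})^{1/2}$ needs $2b-a<0$, while the proposition is asserted for all $b<a$; bridging the range $[a/2,a)$ is precisely what the extra argument of Lemma \ref{lem1catend} is for. So the algebraic skeleton of your proof is correct and identical to the paper's, but the analytic infrastructure that makes it a proof is missing.
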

		
		In order to prove Proposition \ref{mainineqshr}, we first establish the integrability of several functions in the lemmas to follow. Throughout the proofs, we consider $\varphi$-non-parabolic ends $E_1$ and $E_{2}$ so that $M=E_2\cup E_1$, and $C$ will denote a generic positive constant, independent of indices or radii, which may change from line to line. Recall that $\rho\geq0$.
		
		\begin{lemma}\label{lem1catend}
			Under the conditions of Proposition \ref{mainineqshr}, we have
			$$\int\limits_M|\nabla u|^2e^{2bf}<\infty.$$
		\end{lemma}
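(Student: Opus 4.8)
The plan is to deduce the weighted integrability from a sharp pointwise decay estimate for $u$, produced by a barrier argument, and then to close a weighted Caccioppoli estimate on each end. Recall that $u$ comes from Proposition \ref{defuvarphi}, so $0<u<1$ on $M$, $\inf_{E_1}u=0$, $\sup_{E_2}u=1$, and $\int_M|\nabla u|^2e^{af}<\infty$ since $e^{-\varphi}=e^{af}$. Writing $M=E_1\cup E_2$, the function $u$ tends to $0$ at infinity along $E_1$, while $1-u$ (which is again $\varphi$-harmonic) tends to $0$ at infinity along $E_2$. As $f$ is proper while $e^{2bf}$ and $|\nabla u|^2$ are bounded on compact sets, it is enough to bound $\int_{E_i}|\nabla u|^2e^{2bf}$ near infinity on each end; by symmetry I treat $E_1$ with the function $u$.

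The heart of the matter is the decay estimate $u\le Ce^{-sf}$ on $E_1$ for a fixed $s$ with $b<s<a$ (such $s$ exists because $b<a$). To obtain it I use the barrier $e^{-sf}$, for which a direct computation gives
\[
\Delta_\varphi\big(e^{-sf}\big)=s\,e^{-sf}\big((s-a)|\nabla f|^2-\Delta f\big).
\]
Here $\Delta f=(n\rho-1)R+n\lambda$ is bounded by \eqref{cat1} and $0\le R\le K$, whereas $|\nabla f|^2\ge\alpha f-\beta\to\infty$ by Proposition \ref{cor33RGES}. Since $s-a<0$, the bracket tends to $-\infty$ as $f\to\infty$, so $e^{-sf}$ is $\varphi$-superharmonic on $E_1\cap\{f>f_0\}$ for $f_0$ large. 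Taking $C=e^{sf_0}$ gives $Ce^{-sf}\ge 1\ge u$ on the compact boundary $\{f=f_0\}\cap E_1$, and both $Ce^{-sf}$ and $u$ vanish at infinity; the minimum principle applied to the $\varphi$-superharmonic function $Ce^{-sf}-u$ then yields $u\le Ce^{-sf}$ on $E_1\cap\{f>f_0\}$, hence on all of $E_1$ after enlarging $C$.

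With the decay in hand I run a weighted Caccioppoli estimate: testing $\Delta_\varphi u=0$ against $u\,e^{(2b-a)f}\eta^2$ (with $\eta$ a cut-off, the pairing taken against $e^{af}\,dV$) and integrating by parts once more leads to an estimate of the form
\[
\int_{E_1}|\nabla u|^2e^{2bf}\eta^2\le C\int_{E_1}u^2e^{2bf}\big(|\Delta f|+|\nabla f|^2\big)\eta^2+C\int_{E_1}u^2e^{2bf}\big(|\nabla\eta|^2+|\nabla f||\nabla\eta|\eta\big).
\]
Substituting $u^2\le C^2e^{-2sf}$ bounds the critical integrand by $Ce^{2(b-s)f}\big(1+\epsilon f+\delta\big)$, using $|\nabla f|^2\le\epsilon f+\delta$ from Proposition \ref{cor33RGES}. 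Since $b-s<0$ and $f$ grows quadratically by Proposition \ref{prop42ineqfd}, $e^{2(b-s)f}$ decays Gaussian-like in the distance, dominating the polynomial factor as well as the at most polynomial volume growth available from $Ric_f\ge\lambda g$; letting $\eta\to1$ gives $\int_{E_1}|\nabla u|^2e^{2bf}<\infty$, and the same argument with $1-u$ on $E_2$ finishes the proof. I expect the decay step to be the main obstacle: one must check that $e^{-sf}$ is a genuine barrier, which hinges on the boundedness of $\Delta f$ against the unbounded $|\nabla f|^2$ — a feature special to the soliton structure — and one must justify the minimum principle on the non-compact end, using crucially that both $u$ and the barrier vanish at infinity.
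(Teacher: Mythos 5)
Your barrier strategy founders on one unjustified claim, and it is precisely the claim you flag as ``the main obstacle'': that $u$ tends to $0$ at infinity along $E_1$ (and $1-u$ tends to $0$ along $E_2$). Proposition \ref{defuvarphi} gives only $\inf_{E_1}u=0$ and $\sup_{E_2}u=1$; this yields a sequence $x_k\to E_1(\infty)$ with $u(x_k)\to 0$, not the uniform limit $\lim_{x\to E_1(\infty)}u(x)=0$. A bounded $\varphi$-harmonic function on a non-parabolic end need not converge at infinity, and in the Munteanu--Wang framework such pointwise decay is a \emph{consequence} of integral estimates of exactly the type being proved here, so assuming it is close to circular. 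Without it, the comparison of $u$ with $Ce^{-sf}$ on the noncompact region $E_1\cap\{f>f_0\}$ has no justification: the minimum principle for $\varphi$-superharmonic functions on a noncompact domain requires control of $Ce^{-sf}-u$ at infinity, and the barrier alone vanishing there is not enough. (Your computation of $\Delta_\varphi(e^{-sf})$ and the superharmonicity for large $f$ are correct; the Caccioppoli step would also close once the decay $u\le Ce^{-sf}$ is in hand.)

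The gap is repairable, but only by retreating to the exhaustion that the paper actually uses: $u$ is the limit of $\varphi$-harmonic functions $u_i$ on $B(x_0,r_i)$ with $u_i=0$ on $\partial B(x_0,r_i)\cap E_1$ and $u_i=1$ on $\partial B(x_0,r_i)\cap E_2$ (see \eqref{upropcatino15}). On the compact domain $E_1\cap\{f>f_0\}\cap B(x_0,r_i)$ the boundary values of $u_i$ are controlled ($u_i\le 1=e^{sf_0}e^{-sf_0}$ on $\{f=f_0\}$ and $u_i=0$ on $\partial B(x_0,r_i)$), so the comparison principle applies and gives $u_i\le Ce^{-sf}$ uniformly in $i$; letting $i\to\infty$ recovers your decay estimate legitimately. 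Note that the paper's own proof sidesteps pointwise decay entirely: it tests the identity $\Delta_{-2bf}e^{-bf}=-(b\Delta f+b^2|\nabla f|^2)e^{-bf}$ against $(\psi u_i)^2$, and uses the two-sided bound $\alpha f-\beta\le|\nabla f|^2\le\epsilon f+\delta$ of Proposition \ref{cor33RGES} to bound $\int f u_i^2e^{2bf}$ and then $\int|\nabla u_i|^2e^{2bf}$ uniformly in $i$. So the integral route needs less than your pointwise route, which is why the paper can avoid the issue you ran into.
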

		\begin{proof}
			From the last line in \eqref{defuvarphi}, it suffices to prove the inequality for $b>a/2$. Following \cite{MuWa5} (see details in \cite{pLi}), a function $u$ satisfying \eqref{upropcatino15} can be constructed as the limit of a sequence of $\varphi$-harmonic functions $u_i$ defined on geodesic balls $B(x_0,r_i)$ of radii $r_i\rightarrow\infty$ such that
			\begin{equation}\label{upropcatino15}
				\begin{split}
					u_i=0 \mbox{ on } \partial B(x_0,r_i)\cap E_1,\\
					u_i=1 \mbox{ on } \partial B(x_0,r_i)\cap E_2.
				\end{split}
			\end{equation}
			
			On the other hand, multiplying the identity
			\begin{equation*}
				\Delta_{-2bf}e^{-bf}=-\left(b\Delta f +b^2|\nabla f|^2\right)e^{-bf},
			\end{equation*}
			by $\phi\in C_0^\infty(M)$ and integrating by parts we obtain the inequality
			\begin{equation}\label{ptinlem1}
				\int\limits_M\left(b\Delta f +b^2|\nabla f|^2\right)\phi^2e^{2bf}\leq\int\limits_M|\nabla\phi|^2e^{2bf}.
			\end{equation}
			
			Denote by $E_1(x_0,r)=E_1\cap B_{x_0}(r)$, set $r_0>0$ and let 
			$$\psi(x)=\left\{\begin{array}{ll}
				0& \mbox{ on } M\backslash(E_1\backslash E_1(x_0,r_0)),  \\
				r(x)-r_0& \mbox{ on } E_1(x_0,r_{0}+1)\backslash E_1(x_0,r_0),  \\
				1& \mbox{ on } E_1\backslash E_1(x_0,r_0+1).
			\end{array}
			\right.
			$$
			Let $\phi=\psi u_i$ for $u_i$ as given in (\ref{upropcatino15}), then $\phi\in C_0^\infty(M)$ and from inequality (\ref{ptinlem1}) we get
			\begin{equation}\label{auxlemcat10}
				\int\limits_M\left(b\Delta f +b^2|\nabla f|^2\right)\psi^2u_i^2e^{2bf}\leq\int\limits_M|\nabla(\psi u_i)|^2e^{2bf}.
			\end{equation}
			Expanding the right-hand side of \eqref{auxlemcat10}, using Green's identity and that $u_i$ is $\varphi$-harmonic we get
			\begin{equation}
				\begin{split}\label{auxlemcat11}
					\int\limits_M|\nabla(\psi u_i)|^2e^{2bf}=&\int\limits_M u_i^2|\nabla\psi|^2 e^{2bf}+\frac{1}{2}\int\limits_M\langle\nabla u_i^2,\nabla\psi^2\rangle e^{2bf}+\int\limits_M\psi^2|\nabla u_i|^2 e^{2bf}\\
					=&\int\limits_M u_i^2|\nabla\psi|^2 e^{2bf}-\int\limits_M\psi^2 u_i(\Delta u_i+2b\langle\nabla f,\nabla u_i\rangle) e^{2bf}\\
					=&\int\limits_M u_i^2|\nabla\psi|^2 e^{2bf}-\left(b-\frac{a}{2}\right)\int\limits_M\psi^2 \langle\nabla f,\nabla u_i^2\rangle e^{2bf}.
				\end{split}
			\end{equation}
			By noticing
			\begin{equation*}
				\begin{split}
					\int\limits_M\psi^2 \langle\nabla f,\nabla u_i^2\rangle e^{2bf}
					=-\int\limits_M u_i^2 \left[\psi^2\Delta f+\langle\nabla\psi^2,\nabla f\rangle+2b\psi^2|\nabla f|^2\right]e^{2bf},
				\end{split}
			\end{equation*}
			we conclude from (\ref{auxlemcat11}) that
			\begin{equation}\label{auxlemcat12}
				\begin{split}
					\int\limits_M|\nabla(\psi u_i)|^2e^{2bf}=&\int\limits_M u_i^2|\nabla\psi|^2 e^{2bf}+\left(b-\frac{a}{2}\right)\int\limits_M u_i^2\langle\nabla f,\nabla\psi^2\rangle e^{2bf}\\
					&+\left(b-\frac{a}{2}\right)\int\limits_M u_i^2\left(\psi^2\Delta f+2b \psi^2|\nabla f|^2\right) e^{2bf}.\\
				\end{split}
			\end{equation}
			Since both $\nabla\psi$ and $u_i$ are bounded, and $\nabla\psi$ has support only on the annulus $E_1(x_0,r_{0}+1)\backslash E_1(x_0,r_0)$, first and second integral on the right side of (\ref{auxlemcat12}) are finite. Thus, given the convergence of the $u_i$'s, there is a constant $C>0$ independent of $i$, such that
			\begin{equation}\label{auxlemcat13}
				\int\limits_M|\nabla(\psi u_i)|^2e^{2bf}\leq \left(b-\frac{a}{2}\right)\int\limits_M \left(\Delta f+2b|\nabla f|^2\right)u_i^2\psi^2 e^{2bf}+C.
			\end{equation}
			Combining with (\ref{auxlemcat10}), we conclude that
			$$b\int\limits_M\left(\Delta f +b|\nabla f|^2\right)u_i^2\psi^2e^{2bf}\leq \left(b-\frac{a}{2}\right)\int\limits_M \left(\Delta f+2b|\nabla f|^2\right)u_i^2\psi^2 e^{2bf}+C,$$
			this is,
			$$\int\limits_M\left(\frac{a}{2}\Delta f +\left(ab-b^2\right)|\nabla f|^2\right)u_i^2\psi^2e^{2bf}\leq C.$$
			Applying Proposition \ref{catino} and Theorem \ref{cor33RGES}, inequality above implies
			$$\int\limits_M\left(\frac{a}{2}[(n\rho-1)R+n\lambda] +\left(ab-b^2\right)(\alpha f-\beta)\right)u_i^2\psi^2e^{2bf}\leq C,$$
			since $R$ is bounded, this means there are constants $C_1$, $C_2$ independent of $i$ and depending only on $a$, $b$, $\lambda$ and $K$ such that
			$$\left(ab-b^2\right)\int\limits_M\left(\alpha f+C_1\right)u_i^2\psi^2e^{2bf}\leq C_2,$$
			which up to a translation implies that
			$$\int\limits_{E_1} fu_i^2 e^{2bf}\leq C.$$ 
			Combining the last inequality with (\ref{auxlemcat13}), we conclude that
			\begin{equation*}
				\int\limits_{E_1}|\nabla u_i|^2e^{2bf}\leq C,
			\end{equation*}
			which proves the lemma over $E_1$ for $u_i$. Proceeding analogously, similar estimate holds on $E_2$ as well (given it is, at most, also a $\varphi$-non-parabolic end), thus, by making $i\to\infty$, the Lemma follows.
		\end{proof}

		For the next Lemmas, we will consider the cut-off function
		\begin{equation}\label{philemmas3}
			\phi(x)=\left\{\begin{array}{ll}
				1 & \mbox{ on } D(T), \\
				T+1-f(x) & \mbox{ on } D(T+1)\backslash D(T), \\
				0 & \mbox{ on } M\backslash D(T+1),
			\end{array}\right.  
		\end{equation}
		where $D(T)=\{x\in M : f(x)\leq T\}$.
		\begin{lemma}\label{lem2catend}
			Under the conditions of Proposition \ref{mainineqshr}, we have
			$$\int\limits_M|u_{ij}|^2e^{2bf}<\infty.$$
		\end{lemma}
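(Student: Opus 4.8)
The goal is to control the full Hessian of $u$ (or of the approximating sequence $u_i$) in the weighted $L^2$ sense. The natural engine is the Bochner formula applied to $u$. Since $u$ is $\varphi$-harmonic, we have $\Delta u = a\langle \nabla f, \nabla u\rangle$, and the Bochner formula gives
$$\tfrac12 \Delta |\nabla u|^2 = |u_{ij}|^2 + \langle \nabla u, \nabla \Delta u\rangle + \mathrm{Ric}(\nabla u, \nabla u).$$
The plan is to multiply this identity by the weight $\phi^2 e^{2bf}$ (with $\phi$ the cut-off from (3.10)), integrate over $M$, and integrate by parts to move derivatives off $|\nabla u|^2$, thereby isolating $\int_M |u_{ij}|^2 \phi^2 e^{2bf}$ on one side. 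The $\langle \nabla u, \nabla \Delta u\rangle$ term must be rewritten using $\Delta u = a\langle\nabla f,\nabla u\rangle$, and then re-expanded via commuting derivatives; this will generate further Hessian-of-$f$ and Hessian-of-$u$ terms that need to be absorbed by Young's inequality.

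The key steps, in order, are as follows. First, I would establish the pointwise Bochner identity and substitute $\Delta u = a\langle \nabla f, \nabla u\rangle$ to express $\langle \nabla u, \nabla \Delta u\rangle$ in terms of $f_{ij}$, $u_{ij}$, and the $\rho$-Einstein structure (using (2.1) to trade $\nabla\nabla f$ for $\mathrm{Ric}$ and $\rho R + \lambda$). Second, I would integrate against $\phi^2 e^{2bf}$ and integrate by parts on the $\tfrac12\Delta|\nabla u|^2$ term, producing boundary-type contributions supported on the annulus $D(T+1)\setminus D(T)$ where $\nabla\phi \neq 0$; on that annulus $|\nabla\phi| = |\nabla f|$ by (3.10). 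Third, the cross terms carrying one factor of $u_{ij}$ and one of $f_{ij}$ (hence $|\nabla f|$, via Proposition 2.4, controlled by $\epsilon f + \delta$) get split by Young's inequality, so that a small multiple $\varepsilon \int |u_{ij}|^2 \phi^2 e^{2bf}$ can be absorbed into the left-hand side, leaving lower-order integrands of the form $(\text{poly in } f)\,|\nabla u|^2 e^{2bf}$. Fourth, I would invoke Lemma 3.3 (finiteness of $\int_M |\nabla u|^2 e^{2bf}$) — and, as in its proof, the stronger weighted bound $\int_{E_1} f\, u_i^2 e^{2bf} \le C$ together with an analogous $\int f |\nabla u|^2 e^{2bf} < \infty$ — to conclude the remaining terms are finite, using $R \le K$ and $\mathrm{Ric}_\varphi \ge \lambda g$ to sign the curvature contributions. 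Finally, letting $T \to \infty$ (so $\phi \uparrow 1$) and then $i \to \infty$ yields the claim.

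The main obstacle I anticipate is handling the term $\langle \nabla u, \nabla \Delta u\rangle = a\,\langle \nabla u, \nabla\langle\nabla f,\nabla u\rangle\rangle$: expanding it produces $a\, f_{ij} u_i u_j + a\, u_{ij} f_i u_j$, where the second piece couples $u_{ij}$ with $\nabla f$. Since $|\nabla f|^2$ only grows linearly in $f$ (Proposition 2.4), the factor $|\nabla f|$ behaves like $\sqrt{f}$, and when this multiplies $|u_{ij}|$ the Young splitting leaves behind a term like $a^2 f\, |\nabla u|^2 e^{2bf}$. Controlling \emph{that} integral is precisely why the refined bound $\int_M f\,|\nabla u|^2 e^{2bf}<\infty$ is needed, and establishing it may require repeating the cut-off argument of Lemma 3.3 with the extra weight of $f$, where the convexity encoded in $\mathrm{Ric}_\varphi \ge \lambda g$ and the quadratic growth of $f$ (Proposition 2.5) must be leveraged to beat the polynomial prefactor against the Gaussian-type decay of $e^{2bf}$ relative to $e^{-\varphi} = e^{af}$ (using $b < a$). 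The bookkeeping of which powers of $(a+1)$ and $f$ appear, and ensuring every cross term is absorbable with an $\varepsilon$ to spare, is where the computation is delicate but should go through under the stated hypotheses.
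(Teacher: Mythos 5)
Your proposal takes essentially the same route as the paper's proof: Bochner's formula applied to the $\varphi$-harmonic function, integration against $\phi^2 e^{2bf}$ with the sublevel-set cut-off, Young absorption of the $u_{ij}$--$\nabla f$ cross terms back into the Hessian integral, and control of the polynomial-in-$f$ factors via the gap $b<a$ (the paper in fact needs $\int_M|\nabla u|^2|\nabla f|^4e^{2bf}<\infty$, an $f^2$-weight rather than $f$, but it handles this exactly by your mechanism --- simply invoking Lemma \ref{lem1catend} with a slightly larger exponent $\bar b<a$, no re-run of the cut-off argument required). The only slip is the sign $\Delta u=-a\langle\nabla f,\nabla u\rangle$ (not $+a$), which is immaterial since every such term is estimated in absolute value.
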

		\begin{proof}
			Consider $\phi$ as given by (\ref{philemmas3}). From the divergence theorem, we have
			\begin{equation}\label{413catend}
				\begin{split}
					\int\limits_M|u_{ij}|^2\phi^2e^{2bf}=&-\int\limits_M u_{ijj}u_i\phi^2e^{2bf}-2b\int\limits_Mu_{ij}u_i\phi^2f_je^{2bf}-2\int\limits_Mu_{ij}u_i\phi_j\phi e^{2bf}.
				\end{split}
			\end{equation}
			
			By Young's inequality, second term on the right side can be estimated as
			\begin{equation}\label{4135catend}
				\begin{split}
					-2b\int\limits_Mu_{ij}u_i\phi^2f_je^{2bf}
					\leq& \frac{1}{4}\int\limits_M|u_{ij}|^2\phi^2e^{2bf}+2b^2\int\limits_M |\nabla u|^2|\nabla f|^4\phi^2e^{2bf}+2b^2\int\limits_M |\nabla u|^2\phi^2e^{2bf}.\\
				\end{split}
			\end{equation}
			
			On the other hand, since $|\nabla f|^2\leq(\epsilon f+\delta)$ and $b<a$, there is $y>0$ such that $b+y<a$ and
			\begin{equation*}
				\begin{split}
					\int\limits_M |\nabla u|^2|\nabla f|^4e^{2bf}
					\leq&\frac{C_1}{y^2}\int\limits_M |\nabla u|^2(yf)^2e^{2bf} +\frac{C_2}{y}\int\limits_M |\nabla u|^2(yf)e^{2bf} +C_3\int\limits_M |\nabla u|^2e^{2bf}\\
					\leq&\frac{C_1}{y^2}\int\limits_M |\nabla u|^2e^{2(y+b)f} +\frac{C_2}{y}\int\limits_M |\nabla u|^2e^{2(y/2+b)f} +C_3\int\limits_M |\nabla u|^2e^{2bf},
				\end{split}
			\end{equation*}
			which, together with Lemma \ref{lem1catend}, implies
			\begin{equation}\label{414catend}
				\int\limits_M |\nabla u|^2|\nabla f|^4e^{2bf}<\infty.
			\end{equation}
			In view of this and (\ref{4135catend}), we conclude
			\begin{equation}\label{aux1lem2catend}
				-2b\int\limits_Mu_{ij}u_i\phi^2f_je^{2bf}\leq\frac{1}{4}\int\limits_M|u_{ij}|^2\phi^2e^{2bf}+C,
			\end{equation}
			where $C$ independent of $T$. Analogously, last term on the right-hand side of (\ref{413catend}) can be estimated by
			\begin{equation}\label{aux2lem2catend}
				\begin{split}
					-2\int\limits_Mu_{ij}u_i\phi_j\phi e^{2bf}
					\leq&\frac{1}{4}\int\limits_M|u_{ij}|^2\phi^2e^{2bf}+
					2\int\limits_M|\nabla u|^2e^{2bf}+2\int\limits_M|\nabla u|^2|\nabla f|^4 e^{2bf}\\
					\leq&\frac{1}{4}\int\limits_M|u_{ij}|^2\phi^2e^{2bf}+C.
				\end{split}
			\end{equation}
			Plugging (\ref{aux1lem2catend}) and (\ref{aux2lem2catend}) into (\ref{413catend}) we get~
			$$\int\limits_M|u_{ij}|^2\phi^2e^{2bf}\leq-2\int\limits_M u_{ijj}u_i\phi^2e^{2bf}+C.$$
			Given 
			$$\Delta|\nabla u|^2=2u_iu_{ijj}+2|u_{ij}|^2,$$ 
			it follows from the Bochner formula that
			\begin{equation*}
				\begin{split}
					\int\limits_M|u_{ij}|^2\phi^2e^{2bf}
					=&{\ } 2\int\limits_M |u_{ij}|^2\phi^2e^{2bf}-\int\limits_M \Delta|\nabla u|^2\phi^2e^{2bf}+C\\
					=&{\ } 2\int\limits_M |u_{ij}|^2\phi^2e^{2bf}-2\int\limits_M \left(\langle\nabla\Delta u,\nabla u\rangle +|u_{ij}|^2+\mbox{Ric}(\nabla u,\nabla u)\right)\phi^2e^{2bf}+C\\
					=&-2\int\limits_M\langle\nabla(\Delta u),\nabla u\rangle\phi^2e^{2bf}
					-2\int\limits_M\mbox{Ric}(\nabla u,\nabla u)\phi^2e^{2bf}+C.
				\end{split}
			\end{equation*}
			Now, using Green's identity we obtain
			\begin{equation*}
				\begin{split}
					\int\limits_M\langle\nabla(\Delta u),\nabla u\rangle\phi^2e^{2bf}=&\int\limits_M\left\langle\nabla(\Delta u\phi^2e^{2bf}),\nabla u \right\rangle-2b\int\limits_M\Delta u\left\langle\nabla f,\nabla u \right\rangle\phi^2 e^{2bf}-\int\limits_M\Delta u\left\langle\nabla\phi^2,\nabla u \right\rangle e^{2bf}\\
					=&-\int\limits_M(\Delta u)^2\phi^2e^{2bf} -2b\int\limits_M\Delta u\left\langle\nabla f,\nabla u \right\rangle\phi^2 e^{2bf}-\int\limits_M\Delta u\left\langle\nabla\phi^2,\nabla u \right\rangle e^{2bf}\\
					=&-a^2\int\limits_M\langle\nabla u,\nabla f\rangle^2\phi^2e^{2bf} +a\int\limits_M\langle\nabla u,\nabla f\rangle\left\langle\nabla\phi^2,\nabla u \right\rangle e^{2bf} +2ab\int\limits_M\langle\nabla u,\nabla f\rangle^2\phi^2 e^{2bf}\\
					=&(2ab-a^2)\int\limits_M\langle\nabla u,\nabla f\rangle^2\phi^2 e^{2bf}+a\int\limits_M\langle\nabla u,\nabla f\rangle\left\langle\nabla\phi^2,\nabla u \right\rangle e^{2bf}\\
					\leq&\left(|2ab-a^2|+2a\right)\int\limits_M|\nabla u|^2|\nabla f|^2 e^{2bf},
				\end{split}
			\end{equation*}
			which from Lemma \ref{lem1catend} and (\ref{414catend}) implies 
			\begin{equation}\label{416catend}
				\left| \int\limits_M\langle\nabla(\Delta u),\nabla u\rangle\phi^2e^{2bf}\right|\leq C,
			\end{equation}
			therefore
			\begin{equation}\label{417catend}
				\int\limits_M|u_{ij}|^2\phi^2e^{2bf}\leq2\left|\int\limits_M\mbox{Ric}(\nabla u,\nabla u)\phi^2e^{2bf}\right|+C.
			\end{equation}
			
			From the $\rho$-Einstein soliton equation, the bounds in $R$ and Lemma \ref{lem1catend},
			\begin{equation}\label{418catend}
				\begin{split}
					\left|\int\limits_M\mbox{Ric}(\nabla u,\nabla u)\phi^2e^{2bf}\right| = \left|\int\limits_M\left[(\rho R+\lambda)|\nabla u|^2-f_{ij}u_iu_j\right] \phi^2e^{2bf}\right|
					\leq\left|\int\limits_Mf_{ij}u_iu_j \phi^2e^{2bf}\right|+C.
				\end{split}
			\end{equation}
			However, $\Delta u=-a\langle\nabla f, \nabla u\rangle$, and we can write
			\begin{equation*}
				\begin{split}
					\langle \nabla(\Delta u), \nabla u\rangle=&-a\left\langle \nabla\langle\nabla f, \nabla u\rangle, \nabla u\right\rangle    =-au_{ji}u_if_j-af_{ji}u_iu_j,
				\end{split}
			\end{equation*}
			hence, in view of (\ref{414catend}), (\ref{416catend}) and Lemma \ref{lem1catend},
			\begin{equation*}
				\begin{split}
					\left|\int\limits_Mf_{ij}u_iu_j \phi^2e^{2bf}\right|\leq& \left|\int\limits_Mu_{ij}u_if_j \phi^2e^{2bf}\right|+\frac{1}{a}\left|\int\limits_M\langle \nabla(\Delta u), \nabla u\rangle \phi^2e^{2bf}\right|\\
					\leq&\frac{1}{4}\int\limits_M|u_{ij}|^2\phi^2e^{2bf}+\frac{1}{2}\int\limits_M|\nabla u|^2|\nabla f|^4e^{2bf}+\frac{1}{2}\int\limits_M|\nabla u|^2e^{2bf}+C\\
					\leq&\frac{1}{4}\int\limits_M|u_{ij}|^2\phi^2e^{2bf}+C,
				\end{split}
			\end{equation*}
			together with (\ref{418catend}), this implies
			\begin{equation}\label{4185catend}
				\begin{split}
					\left|\int\limits_M\mbox{Ric}(\nabla u,\nabla u)\phi^2e^{2bf}\right| \leq \frac{1}{4}\int\limits_M|u_{ij}|^2\phi^2e^{2bf}+C.
				\end{split}
			\end{equation}
			By combining (\ref{417catend}) and (\ref{4185catend}) we conclude
			$$\int\limits_M|u_{ij}|^2\phi^2e^{2bf}<\infty,$$
			which concludes the proof.
		\end{proof}
		
		\begin{lemma}\label{lem3catend}
			Under the conditions of Proposition \ref{mainineqshr}, we have
			$$\int\limits_M|\emph{Ric}|^2|\nabla u|^2e^{2bf}<\infty.$$
		\end{lemma}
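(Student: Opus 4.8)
The plan is to start from the identity obtained by solving \eqref{cat3} for $|\mbox{Ric}|^2$, namely
$$|\mbox{Ric}|^2=\rho R^2+\lambda R+\tfrac12\langle\nabla R,\nabla f\rangle-\tfrac{A}{2}\Delta R,$$
with $A=1-2(n-1)\rho$. Multiplying this by $|\nabla u|^2\phi^2e^{2bf}$, where $\phi$ is the cut-off \eqref{philemmas3}, and integrating over $M$, the piece $\rho R^2+\lambda R$ is bounded because $0\le R\le K$, hence contributes at most $C\int_M|\nabla u|^2e^{2bf}$, which is finite by Lemma \ref{lem1catend}. The two remaining terms carry derivatives of $R$ and form the heart of the argument.

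Next I would integrate the $\Delta R$ term by parts (the compact support of $\phi$ kills the boundary), expanding the gradient of $|\nabla u|^2\phi^2e^{2bf}$ into three pieces. The piece hitting $\nabla f$ combines with the pre-existing $\langle\nabla R,\nabla f\rangle$ term; using \eqref{cat2} to replace $\langle\nabla R,\nabla f\rangle$ by $\tfrac{2}{A}\mbox{Ric}(\nabla f,\nabla f)$, followed by Cauchy--Schwarz and Young's inequality, this is controlled by $\varepsilon\int_M|\mbox{Ric}|^2|\nabla u|^2\phi^2e^{2bf}$ (to be absorbed) plus $C_\varepsilon\int_M|\nabla f|^4|\nabla u|^2e^{2bf}$, which is finite by \eqref{414catend}. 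The cut-off piece is supported on $D(T+1)\setminus D(T)$, where $|\nabla\phi|=|\nabla f|$; after \eqref{cat2} and Young it is again bounded by $\varepsilon\int_M|\mbox{Ric}|^2|\nabla u|^2\phi^2e^{2bf}$ plus a tail of the convergent integral $\int_M|\nabla f|^4|\nabla u|^2e^{2bf}$, hence tends to $0$ as $T\to\infty$.

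The decisive term is the one pairing $\nabla R$ with $\nabla|\nabla u|^2$. Writing $\nabla R=\tfrac{2}{A}\mbox{Ric}(\nabla f)$ via \eqref{cat2} and $\nabla|\nabla u|^2$ through the Hessian $u_{ij}$, this becomes a multiple of $\int_M R_{ij}f_ju_{ik}u_k\,\phi^2e^{2bf}$, which after Young splits into $\varepsilon\int_M|\mbox{Ric}|^2|\nabla u|^2\phi^2e^{2bf}$ and a constant times $\int_M|\nabla f|^2|u_{ij}|^2e^{2bf}$. This last integral is the main obstacle: Lemma \ref{lem2catend} only supplies $\int_M|u_{ij}|^2e^{2bf}<\infty$, not the version weighted by $|\nabla f|^2$. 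I would resolve it by exploiting the room left by the strict inequality $b<a$: since $|\nabla f|^2\le\epsilon f+\delta$ by Proposition \ref{cor33RGES} and a polynomial factor is dominated by a slightly larger exponential, one has $|\nabla f|^2e^{2bf}\le C_\eta e^{2(b+\eta)f}$ for any small $\eta>0$; choosing $\eta$ so that $b+\eta<a$ and applying Lemma \ref{lem2catend} at the admissible exponent $b+\eta$ gives $\int_M|\nabla f|^2|u_{ij}|^2e^{2bf}\le C_\eta\int_M|u_{ij}|^2e^{2(b+\eta)f}<\infty$.

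Finally I would collect every $\varepsilon\int_M|\mbox{Ric}|^2|\nabla u|^2\phi^2e^{2bf}$ contribution on the left. For each fixed $T$ this integral is finite because $\phi$ has compact support, so the absorption is legitimate; taking $\varepsilon$ small makes the coefficient positive and yields a bound for $\int_M|\mbox{Ric}|^2|\nabla u|^2\phi^2e^{2bf}$ uniform in $T$. Letting $T\to\infty$, so that $\phi^2\nearrow1$ monotonically, monotone convergence produces $\int_M|\mbox{Ric}|^2|\nabla u|^2e^{2bf}<\infty$. When $A=0$, i.e.\ for Schouten solitons, \eqref{cat2} forces $\mbox{Ric}(\nabla f)=0$ and the scalar curvature is rigid enough that the $\Delta R$ term drops out, so the bound is immediate; thus the substantive case is $A\neq0$.
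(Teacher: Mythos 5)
Your main case ($A\neq0$) is correct and follows essentially the paper's own argument: the same rewriting of $|\mbox{Ric}|^2$ via \eqref{cat3}, the same integration by parts of the $\Delta R$ term against $|\nabla u|^2\phi^2e^{2bf}$, the same conversion of $\nabla R$ into $\mbox{Ric}(\nabla f)$ via \eqref{cat2} followed by Cauchy--Schwarz and Young, and the same absorption of the $\varepsilon$-terms (the paper takes $\varepsilon=\tfrac14$ three times). Making explicit the control of $\int_M|u_{ij}|^2|\nabla f|^2e^{2bf}$ by sliding the exponent from $b$ to $b+\eta<a$ is a good point; the paper leaves that step implicit when it invokes Lemma \ref{lem2catend} together with \eqref{414catend}.

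The gap is your treatment of the case $A=0$, i.e.\ $\rho=\tfrac{1}{2(n-1)}$, which is allowed by the hypothesis $\rho\geq0$ and therefore must be covered. Your claim that there ``the bound is immediate'' is false, and the reason offered for it is incorrect: when $A=0$, identity \eqref{cat2} degenerates to $\mbox{Ric}(\nabla f)=0$ and gives no information about $\nabla R$; in particular the replacement $\langle\nabla R,\nabla f\rangle=\tfrac{2}{A}\mbox{Ric}(\nabla f,\nabla f)$ is illegitimate, as it requires dividing by $A$. At $A=0$ identity \eqref{cat3} reads $2|\mbox{Ric}|^2=\langle\nabla R,\nabla f\rangle+2(\rho R^2+\lambda R)$, so while the $\Delta R$ term indeed drops out, the term $\langle\nabla R,\nabla f\rangle$ survives and carries all the content: finiteness of $\int_M|\mbox{Ric}|^2|\nabla u|^2e^{2bf}$ is exactly equivalent to controlling $\int_M\langle\nabla R,\nabla f\rangle|\nabla u|^2\phi^2e^{2bf}$, which is not pointwise bounded by anything at your disposal. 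The paper closes this case by one further integration by parts moving the derivative off $R$, producing the terms $R(\Delta f)|\nabla u|^2$, $R|\nabla u|\langle\nabla|\nabla u|,\nabla f\rangle$, $R\langle\nabla f,\nabla\phi^2\rangle|\nabla u|^2$ and $R|\nabla f|^2|\nabla u|^2$, each of which is integrable against $e^{2bf}$ using \eqref{cat1} with $0\leq R\leq K$, Kato's and Young's inequalities, Lemmas \ref{lem1catend} and \ref{lem2catend}, and \eqref{414catend}. Your proof needs this (or an equivalent) supplement before it covers the full range of the hypotheses.
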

		\begin{proof}
			Consider $\phi$ as given by (\ref{philemmas3}). From identity \eqref{cat3} and letting $A=1-2(n-1)\rho$, we get from Green's identity and Lemma (\ref{lem1catend}) that
			\begin{equation}\label{419catend}
				\begin{split}
					2\int\limits_M|\mbox{Ric}|^2|\nabla u|^2\phi^2e^{2bf}=& -A\int\limits_M \Delta R|\nabla u|^2\phi^2e^{2bf}+\int\limits_M \langle\nabla R,\nabla f\rangle |\nabla u|^2\phi^2e^{2bf}\\
					&+2\int\limits_M  R(\rho R+\lambda)|\nabla u|^2\phi^2e^{2bf}\\
					\leq&A \int\limits_M \left\langle\nabla R,\nabla\left(|\nabla u|^2\phi^2e^{2bf}\right)\right\rangle +\int\limits_M \langle\nabla R,\nabla f\rangle |\nabla u|^2\phi^2e^{2bf} +C\\
					=&A\int\limits_M \left\langle\nabla R,\nabla|\nabla u|^2\phi^2+\nabla\phi^2|\nabla u|^2+2b\nabla f|\nabla u|^2\phi^2\right\rangle e^{2bf}\\
					&+\int\limits_M \langle\nabla R,\nabla f\rangle |\nabla u|^2\phi^2e^{2bf} +C\\
					=&2A\int\limits_M \left(u_{ij}u_iR_j\right)\phi^2e^{2bf} +A\int\limits_M\left\langle\nabla R,\nabla\phi^2\right\rangle|\nabla u|^2e^{2bf}\\
					&+(2Ab+1)\int\limits_M \left\langle\nabla R,\nabla f\right\rangle|\nabla u|^2\phi^2e^{2bf}+C.
				\end{split}
			\end{equation}
			If $\rho\neq\frac{1}{2(n-1)}$, we have $A\neq0$ and (\ref{419catend}) can be estimated as follows:
			
			Since $A\nabla R=2\mbox{Ric}(\nabla f)$, we have from Young's inequality that
			\begin{equation*}
				\begin{split}
					2A\left(u_{ij}u_iR_j\right)\leq&4|u_{ij}||\nabla u||\nabla f||\mbox{Ric}|\leq\frac{1}{4}|\mbox{Ric}|^2|\nabla u|^2+16|u_{ij}|^2|\nabla f|^2.
				\end{split}
			\end{equation*}
			Analogously,
			\begin{equation*}
				\begin{split}
					A\left\langle\nabla R,\nabla\phi^2\right\rangle|\nabla u|^2=&4\mbox{Ric}(\nabla f,\nabla \phi)\phi|\nabla u|^2\leq4|\mbox{Ric}||\nabla f|^2|\nabla u|^2\phi\\
					\leq& \frac{1}{4}|\mbox{Ric}|^2|\nabla u|^2\phi^2+16|\nabla u|^2|\nabla f|^2,
				\end{split}
			\end{equation*}
			and
			\begin{equation*}
				\begin{split} 
					(2Ab+1) \left\langle\nabla R,\nabla f\right\rangle|\nabla u|^2=& \frac{2}{A}(2Ab+1) \mbox{Ric}(\nabla f,\nabla f)|\nabla u|^2\\
					\leq& \frac{1}{4}|\mbox{Ric}|^2|\nabla u|^2+\frac{4(2Ab+1)^2}{A^2}|\nabla u|^2|\nabla f|^4.
				\end{split}
			\end{equation*}
			Therefore, we get from (\ref{419catend})
			\begin{equation}\label{421catend}
				\begin{split}
					2\int\limits_M|\mbox{Ric}|^2|\nabla u|^2\phi^2e^{2bf}\leq& \int\limits_M \left(\frac{1}{4}|\mbox{Ric}|^2|\nabla u|^2+16|u_{ij}|^2|\nabla f|^2\right)\phi^2e^{2bf}\\
					&+\int\limits_M\left( \frac{1}{4}|\mbox{Ric}|^2|\nabla u|^2\phi^2+16|\nabla u|^2|\nabla f|^2 \right)e^{2bf}\\
					&+\int\limits_M \left(\frac{1}{4}|\mbox{Ric}|^2|\nabla u|^2+\frac{4(2Ab+1)^2}{A^2}|\nabla u|^2|\nabla f|^4\right)\phi^2e^{2bf}+C,
				\end{split}
			\end{equation}
			or equivalently,
			\begin{equation*}
				\begin{split}
					\frac{5}{4}\int\limits_M|\mbox{Ric}|^2|\nabla u|^2\phi^2e^{2bf}\leq& 16\int\limits_M |u_{ij}|^2|\nabla f|^2\phi^2e^{2bf} +16\int\limits_M|\nabla u|^2|\nabla f|^2e^{2bf}\\
					&+\frac{4(2Ab+1)^2}{A^2}\int\limits_M |\nabla u|^2|\nabla f|^4\phi^2e^{2bf}+C.
				\end{split}
			\end{equation*}
			Recalling Lemmas \ref{lem1catend} and \ref{lem2catend} and equation (\ref{414catend}), we conclude
			\begin{equation*}
				\begin{split}
					\int\limits_M|\mbox{Ric}|^2|\nabla u|^2\phi^2e^{2bf}<\infty,
				\end{split}
			\end{equation*}
			as we wanted to prove. If $A=0$, then (\ref{419catend}) becomes
			$$2\int\limits_M|\mbox{Ric}|^2|\nabla u|^2\phi^2e^{2bf}\leq\int\limits_M \left\langle\nabla R,\nabla f\right\rangle|\nabla u|^2\phi^2e^{2bf}+C,$$
			which can be estimated as
			\begin{equation}
				\begin{split}
					\int\limits_M \left\langle\nabla R,\nabla f\right\rangle|\nabla u|^2\phi^2e^{2bf}=&\int\limits_M \left\langle\nabla\left(R|\nabla u|^2\phi^2e^{2bf}\right),\nabla f\right\rangle-\int\limits_M R\left\langle\nabla\left(|\nabla u|^2\phi^2e^{2bf}\right),\nabla f\right\rangle\\
					=&-\int\limits_M R\Delta f|\nabla u|^2\phi^2e^{2bf}-2\int\limits_M R|\nabla u|\left\langle\nabla|\nabla u|,\nabla f\right\rangle\phi^2e^{2bf}\\
					&-\int\limits_M R|\nabla u|^2\left\langle\nabla\phi^2,\nabla f\right\rangle e^{2bf}-2b\int\limits_M R|\nabla u|^2|\nabla f|^2\phi^2 e^{2bf}.
				\end{split}
			\end{equation}
			Thus, from Proposition \ref{catino} and Young's and Kato's inequalities, we have
			\begin{equation*}
				\begin{split}
					\int\limits_M \left\langle\nabla R,\nabla f\right\rangle|\nabla u|^2\phi^2e^{2bf}\leq&\int\limits_M \left|\left(\frac{n}{2(n-1)}-1\right)R+n\lambda\right|R|\nabla u|^2\phi^2e^{2bf}\\
					+&2\int\limits_M R|\nabla u||\nabla|\nabla u|||\nabla f|\phi^2e^{2bf} + \int\limits_M R|\nabla u|^2|\nabla\phi^2||\nabla f|e^{2bf}+C\\
					\leq&C_1\int\limits_M |\nabla u|^2\phi^2e^{2bf}+C_2\int\limits_M|\nabla u|^2|\nabla f|^2\phi^2e^{2bf}+\int\limits_M |u_{ij}|^2\phi^2e^{2bf}\\
					&+C_4\int\limits_M |\nabla u|^2\phi^2 e^{2bf} +C_5\int\limits_M |\nabla u|^2|\nabla f|^4 e^{2bf}+C.
				\end{split}
			\end{equation*}
			Applying Lemmas \ref{lem1catend}, \ref{lem2catend} and equation (\ref{414catend}), we get
			$$\int\limits_M|\mbox{Ric}|^2|\nabla u|^2\phi^2e^{2bf}\leq\frac{1}{2}\int\limits_M \left\langle\nabla R,\nabla f\right\rangle|\nabla u|^2\phi^2e^{2bf}+C<\infty.$$
			This concludes the proof of the Lemma.
		\end{proof}
		
		As a consequence of the lemmas above, we get the following estimate:
		\begin{lemma}\label{lem4catend}
			Under the conditions of Proposition \ref{mainineqshr}, we have
			$$\int\limits_M|\nabla u|e^{bf}+ \int\limits_M|u_{ij}|e^{bf}+ \int\limits_M|\mbox{\emph{Ric}}||\nabla u|e^{bf}<\infty.$$
		\end{lemma}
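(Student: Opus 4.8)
The plan is to obtain these three $L^1$-type estimates from the weighted $L^2$-estimates of Lemmas \ref{lem1catend}--\ref{lem3catend} by a single application of the Cauchy--Schwarz inequality against a rapidly decaying weight. The point is that the quantities to be bounded carry the single exponential weight $e^{bf}$, whereas Lemmas \ref{lem1catend}--\ref{lem3catend} furnish control with the heavier weight $e^{2b'f}$ for every $b'<a$; since $b<a$, there is room to factor off a Gaussian-type term $e^{(b-b')f}$ with negative exponent.

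First I would fix an auxiliary constant $b'$ with $b<b'<a$. As the hypotheses of Proposition \ref{mainineqshr} require only that the exponent be smaller than $a$, Lemmas \ref{lem1catend}, \ref{lem2catend} and \ref{lem3catend} apply verbatim with $b'$ in place of $b$, so that
\begin{equation*}
\int\limits_M|\nabla u|^2e^{2b'f}<\infty,\qquad \int\limits_M|u_{ij}|^2e^{2b'f}<\infty,\qquad \int\limits_M|\mbox{Ric}|^2|\nabla u|^2e^{2b'f}<\infty.
\end{equation*}
For each of the three target quantities I then factor the integrand and apply Cauchy--Schwarz; for instance,
\begin{equation*}
\int\limits_M|\nabla u|e^{bf}=\int\limits_M\left(|\nabla u|e^{b'f}\right)e^{(b-b')f}\leq\left(\int\limits_M|\nabla u|^2e^{2b'f}\right)^{1/2}\left(\int\limits_M e^{2(b-b')f}\right)^{1/2},
\end{equation*}
and symmetrically for $\int_M|u_{ij}|e^{bf}$, pairing with Lemma \ref{lem2catend}, and for $\int_M|\mbox{Ric}||\nabla u|e^{bf}$, pairing with Lemma \ref{lem3catend}. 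In every case the first factor is finite, so the whole argument reduces to the finiteness of the single quantity $\int_M e^{2(b-b')f}$.

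The finiteness of this last integral is the only place where the geometry is used, and I expect it to be the crux. Since $2(b-b')<0$, it is the integral of a Gaussian weight $e^{-\varepsilon f}$ with $\varepsilon=2(b'-b)>0$. By Proposition \ref{prop42ineqfd} the potential grows quadratically, $f(x)\geq\frac14(d(x,x_0)-a)^2$ outside a compact set, so $e^{-\varepsilon f}$ decays like $e^{-\frac{\varepsilon}{4}d(x,x_0)^2}$; together with the polynomial volume growth of these solitons, regarded as smooth metric measure spaces with $\mbox{Ric}_f\geq\lambda g$ and $\lambda>0$ (cf. \cite{MuWa3}), the co-area formula yields $\int_M e^{-\varepsilon f}<\infty$. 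Substituting this back into the three Cauchy--Schwarz bounds completes the proof. Thus the analytic mechanism is routine; the real content is verifying that the Gaussian weight is integrable against the Riemannian volume, while the choice of the intermediate exponent $b'$ is pure bookkeeping.
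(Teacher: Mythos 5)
Your proposal is correct and follows essentially the same route as the paper: the paper fixes the intermediate exponent $\overline{b}=\tfrac{1}{2}(a+b)<a$, invokes Lemmas \ref{lem1catend}--\ref{lem3catend} with $\overline{b}$ in place of $b$, and applies Cauchy--Schwarz to factor off $\int_M e^{(b-a)f}$, exactly as you do with your generic $b'\in(b,a)$. The only difference is that the paper asserts the finiteness of $\int_M e^{(b-a)f}$ without comment, whereas you spell out the justification (quadratic growth of $f$ from Proposition \ref{prop42ineqfd} plus volume growth of the soliton), which is a reasonable filling-in of an implicit step rather than a different argument.
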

		\begin{proof}
			According to Lemmas \ref{lem1catend}, \ref{lem2catend} and \ref{lem3catend}, by making $\overline{b}=\frac{1}{2}(a+b)<a$, we have
			$$\int\limits_M|\nabla u|^2e^{2\overline{b}f}<\infty,\ \ \ \int\limits_M|u_{ij}|^2e^{2\overline{b}f}<\infty \ \ \mbox{ and } \ \ \int\limits_M|\mbox{Ric}|^2|\nabla u|^2e^{2\overline{b}f}<\infty.$$
			Now, since $b-a<0$, we get from the Cauchy-Schwarz inequality that
			\begin{equation*}
				\left(\int\limits_M|\nabla u|e^{bf}\right)^2= \left(\int\limits_M|\nabla u|e^{\overline{b}f}\cdot e^{(\overline{b}-a)f}\right)^2 \leq\left( \int\limits_M|\nabla u|^2e^{2\overline{b}f} \right)\left( \int\limits_Me^{(b-a)f} \right)<\infty,
			\end{equation*}
			analogously, we can easily check that
			$$\left(\int\limits_M|u_{ij}|e^{bf}\right)^2<\infty \ \mbox{ and }\ \left(\int\limits_M|\mbox{Ric}||\nabla u|e^{bf}\right)^2<\infty,$$
			by taking square roots and adding up these inequalities the Lemma is proven.
		\end{proof}
		
		Finally, we present the proof of Proposition \ref{mainineqshr}.
		
		\begin{proof}[Proof of Propostion \ref{mainineqshr}]
			Consider $\phi$ as given by (\ref{philemmas3}). From identity \eqref{cat3} we have
			\begin{equation}\label{422catend}
				\begin{split}
					2\int\limits_M|\mbox{Ric}|^2|\nabla u|\phi^2e^{bf}=& -A\int\limits_M \Delta R|\nabla u|\phi^2e^{bf}+\int\limits_M \langle\nabla R,\nabla f\rangle |\nabla u|\phi^2e^{bf}\\
					&+2\int\limits_M R(\rho R+\lambda) |\nabla u|\phi^2e^{2bf}.
				\end{split}
			\end{equation}
			From Green's identity, second integral on the right-hand side can be written as
			\begin{equation*}
				\begin{split}
					\int\limits_M \langle\nabla R,\nabla f\rangle |\nabla u|\phi^2e^{bf}=&\int\limits_M \langle\nabla f,\nabla \left(R|\nabla u|\phi^2e^{bf}\right)\rangle-\int\limits_M R\langle\nabla f,\nabla \left(|\nabla u|\phi^2e^{bf}\right)\rangle \\
					=&-\int\limits_M (\Delta f)R|\nabla u|\phi^2e^{bf}-\int\limits_M R\langle\nabla f,\nabla |\nabla u|\rangle\phi^2e^{bf}\\
					&-\int\limits_M R\langle\nabla f,\nabla \phi^2\rangle|\nabla u|e^{bf} -b\int\limits_M R|\nabla f|^2|\nabla u|\phi^2e^{bf},
				\end{split}
			\end{equation*}
			by plugging this into (\ref{422catend}), we get
			\begin{equation}\label{424catend}
				\begin{split}
					2\int\limits_M|\mbox{Ric}|^2|\nabla u|\phi^2e^{bf}=& -A\int\limits_M \Delta R|\nabla u|\phi^2e^{bf}-\int\limits_M (\Delta f)R|\nabla u|\phi^2e^{bf}\\
					&-\int\limits_M R\langle\nabla f,\nabla |\nabla u|\rangle\phi^2e^{bf}-\int\limits_M R\langle\nabla f,\nabla \phi^2\rangle|\nabla u|e^{bf} \\
					&-b\int\limits_M R|\nabla f|^2|\nabla u|\phi^2e^{bf}+2\int\limits_M R(\rho R+\lambda) |\nabla u|\phi^2e^{2bf}.
				\end{split}
			\end{equation}
			On the one hand, from the Bochner formula, $\varphi$-harmonicity of $u$ and the soliton equation, we have
			\begin{align}\label{425catend}
				\begin{split}
					\frac{1}{2}\Delta|\nabla u|^2
					=&|u_{ij}|^2-au_{ij}u_if_j-a(\rho R+\lambda)|\nabla u|^2+a\mbox{Ric}(\nabla u, \nabla u)+\mbox{Ric}(\nabla u, \nabla u)\\
					=&|u_{ij}|^2-a|\nabla u|\langle\nabla|\nabla u|,\nabla f\rangle -a(\rho R+\lambda)|\nabla u|^2+(a+1)\mbox{Ric}(\nabla u, \nabla u).
				\end{split}
			\end{align}
			On the other hand, Kato's inequality implies
			$$\frac{1}{2}\Delta|\nabla u|^2=|\nabla u|\Delta|\nabla u|+|\nabla|\nabla u||^2\leq|\nabla u|\Delta|\nabla u|+|u_{ij}|^2,$$
			therefore, it follows from (\ref{425catend}) that
			\begin{equation*}
				\Delta|\nabla u|\geq-a\langle\nabla|\nabla u|,\nabla f\rangle -a(\rho R+\lambda)|\nabla u|+(a+1)\mbox{Ric}(\nabla u, \nabla u)|\nabla u|^{-1}.
			\end{equation*}
			Thus,
			\begin{align}\label{426catend}
				\begin{split}
					\Delta\left(|\nabla u|e^{bf}\right)=&\Delta|\nabla u|e^{bf}+|\nabla u|\Delta e^{bf} +2\langle\nabla|\nabla u|,\nabla e^{bf}\rangle\\
					\geq&-a\langle\nabla|\nabla u|,\nabla f\rangle e^{bf} -a(\rho R+\lambda)|\nabla u|e^{bf}+(a+1)\mbox{Ric}(\nabla u, \nabla u)|\nabla u|^{-1}e^{bf}\\
					&+b|\nabla u|\left(\Delta f\right)e^{bf}+b^2|\nabla u||\nabla f|^2e^{bf}+2b\langle\nabla|\nabla u|,\nabla f\rangle e^{bf}\\
					=&(2b-a)\langle\nabla|\nabla u|,\nabla f\rangle e^{bf}+ \left(b(\Delta f)+b^2|\nabla f|^2-a(\rho R+\lambda)\right)|\nabla u|e^{bf}\\
					&+(a+1)\mbox{Ric}(\nabla u, \nabla u)|\nabla u|^{-1}e^{bf}.
				\end{split}
			\end{align}
			From Green's identity,
			\begin{align*}
				-\int\limits_M(\Delta R)|\nabla u|\phi^2e^{bf}
				=&\int\limits_M\langle \nabla (R\phi^2),\nabla\left(|\nabla u|e^{bf}\right)\rangle-\int\limits_MR\langle \nabla \phi^2,\nabla\left(|\nabla u|e^{bf}\right)\rangle        +\int\limits_M\langle \nabla R,\nabla\phi^2\rangle|\nabla u|e^{bf}\\
				=&-\int\limits_M R\Delta\left(|\nabla u|e^{bf}\right)\phi^2-\int\limits_MR\langle \nabla \phi^2,\nabla\left(|\nabla u|e^{bf}\right)\rangle        +\int\limits_M\langle \nabla R,\nabla\phi^2\rangle|\nabla u|e^{bf};
			\end{align*}
			combined with (\ref{426catend}), this becomes
			\begin{align*}
				-\int\limits_M(\Delta R)|\nabla u|\phi^2e^{bf}\leq&-(2b-a)\int\limits_M R\langle\nabla|\nabla u|,\nabla f\rangle e^{bf}\phi^2\\
				&-\int\limits_M R\left(b(\Delta f)+b^2|\nabla f|^2-a(\rho R+\lambda)\right)|\nabla u|e^{bf}\phi^2\\
				&-(a+1)\int\limits_M \mbox{Ric}(\nabla u, \nabla u)R|\nabla u|^{-1}e^{bf}\phi^2\\
				&-\int\limits_MR\langle \nabla \phi^2,\nabla\left(|\nabla u|e^{bf}\right)\rangle+\int\limits_M\langle \nabla R,\nabla\phi^2\rangle|\nabla u|e^{bf}.
			\end{align*}
			By plugging the inequality above into (\ref{424catend}), we get
			\begin{equation*}
				\begin{split}
					2\int\limits_M|\mbox{Ric}|^2|\nabla u|\phi^2e^{bf}\leq& -(2b-a)A\int\limits_M R\langle\nabla|\nabla u|,\nabla f\rangle e^{bf}\phi^2\\
					&-A\int\limits_M R\left(b(\Delta f)+b^2|\nabla f|^2-a(\rho R+\lambda)\right)|\nabla u|e^{bf}\phi^2\\
					&-(a+1)A\int\limits_M \mbox{Ric}(\nabla u, \nabla u)R|\nabla u|^{-1}e^{bf}\phi^2\\
					&-A\int\limits_MR\langle \nabla \phi^2,\nabla\left(|\nabla u|e^{bf}\right)\rangle+A\int\limits_M\langle \nabla R,\nabla\phi^2\rangle|\nabla u|e^{bf}\\
					&-\int\limits_M (\Delta f)R|\nabla u|\phi^2e^{bf}-\int\limits_M R\langle\nabla f,\nabla |\nabla u|\rangle\phi^2e^{bf}\\
					&-\int\limits_M R\langle\nabla f,\nabla \phi^2\rangle|\nabla u|e^{bf} -b\int\limits_M R|\nabla f|^2|\nabla u|\phi^2e^{bf}\\
					&+2\int\limits_M R(\rho R+\lambda) |\nabla u|\phi^2e^{2bf},\\
				\end{split}
			\end{equation*}
			this is,
			\begin{equation}\label{428catend}
				\begin{split}
					2\int\limits_M|\mbox{Ric}|^2|\nabla u|\phi^2e^{bf}\leq&-((2b-a)A+1)\int\limits_M R\langle\nabla|\nabla u|,\nabla f\rangle e^{bf}\phi^2 \\
					&-\int\limits_M R\left((Ab+1)(\Delta f)-aA(\rho R+\lambda)-2(\rho R+\lambda)\right)|\nabla u|e^{bf}\phi^2\\
					&-(Ab^2+b)\int\limits_M R|\nabla f|^2|\nabla u|\phi^2e^{bf}-(a+1)A\int\limits_M \mbox{Ric}(\nabla u, \nabla u)R|\nabla u|^{-1}e^{bf}\phi^2\\
					&-A\int\limits_MR\langle \nabla \phi^2,\nabla\left(|\nabla u|e^{bf}\right)\rangle-\int\limits_M R\langle\nabla f,\nabla \phi^2\rangle|\nabla u|e^{bf}+A\int\limits_M\langle \nabla R,\nabla\phi^2\rangle|\nabla u|e^{bf}.
				\end{split}
			\end{equation}
			
			Regarding the last three terms on inequality above, 
					notice in first place that we can take (\ref{414catend}) and proceed like in Lemma \ref{lem4catend} to conclude 
					\begin{equation}\label{finiteintnorm}
						\int\limits_M |\nabla f|^2|\nabla u|e^{bf}<\infty.
					\end{equation}
					Next, from Kato's inequality,
					\begin{equation*}
						\begin{split}
							\int\limits_MR\langle\nabla \phi^2,\nabla\left(|\nabla u|e^{bf}\right)\rangle=&\int\limits_MR\langle\nabla \phi^2,\nabla|\nabla u|\rangle e^{bf}+b\int\limits_MR\langle\nabla \phi^2,\nabla f\rangle|\nabla u|e^{bf}\\
							\leq& C_1\int\limits_{D(T+1)\backslash D(T)}|\nabla \phi||u_{ij}|e^{bf} +C_2\int\limits_{D(T+1)\backslash D(T)}|\nabla f|^2|\nabla u|e^{bf}.
						\end{split}
					\end{equation*}
					From Lemma \ref{lem4catend} and (\ref{finiteintnorm}), we know both integrals on the right-hand side above vanish as $T\to\infty$, then, we conclude that
					$$-A\int\limits_MR\langle \nabla \phi^2,\nabla\left(|\nabla u|e^{bf}\right)\rangle-\int\limits_M R\langle\nabla f,\nabla \phi^2\rangle|\nabla u|e^{bf}\to0 \ \mbox{ as }\ T\to\infty.$$
					
					Analogously, given $A\nabla R=2\mbox{Ric}(\nabla f)$, we can use Young's inequality to estimate the last term on (\ref{428catend}) by
					\begin{equation*}
						\begin{split}
							A\int\limits_M\langle \nabla R,\nabla\phi^2\rangle|\nabla u|e^{bf}\leq& \int\limits_{D(T+1)\backslash D(T)}|\mbox{Ric}|^2e^{bf}+ \int\limits_{D(T+1)\backslash D(T)}|\nabla f|^4|\nabla u|^2e^{bf},
						\end{split}
					\end{equation*}
					and from Lemma \ref{lem4catend} and (\ref{414catend}) we infer
					$$A\int\limits_M\langle \nabla R,\nabla\phi^2\rangle|\nabla u|e^{bf}\to0 \ \mbox{ as } \ T\to\infty.$$
					In view of this, by making $T\to\infty$ in (\ref{428catend}), we conclude
					\begin{equation}\label{429catend}
						\begin{split}
							2\int\limits_M|\mbox{Ric}|^2|\nabla u|e^{bf}\leq&-((2b-a)A+1)\int\limits_M R\langle\nabla|\nabla u|,\nabla f\rangle e^{bf} \\
							&-\int\limits_M R\left((Ab+1)(\Delta f)-aA(\rho R+\lambda)-2(\rho R+\lambda)\right)|\nabla u|e^{bf}\\
							&-(Ab^2+b)\int\limits_M R|\nabla f|^2|\nabla u|e^{bf}-(a+1)A\int\limits_M \mbox{Ric}(\nabla u, \nabla u)R|\nabla u|^{-1}e^{bf},
						\end{split}
					\end{equation}
					as we wanted to prove.
				\end{proof}
				
				\section{Connectedness at infinity}\label{lastsec}
				
				In this section, we apply Proposition \ref{mainineqshr} to prove the second main theorem of this paper. We shall suppose the existence of at least two ends and use Theorem \ref{nphinpendthm} to guarantee the existence of a $\varphi$-harmonic function satisfying \eqref{defuvarphi}. We then choose the constants $a$ and $b$ suitably to derive a contradiction.
				
				\begin{proof}[Proof of Theorem \ref{thm1endscl}.]
					Assume by contradiction that $M$ has two ends. Then there is a $\varphi$-harmonic function $u$ satisfying (\ref{defuvarphi}) and (\ref{upropcatino15}). {As the scalar curvature is bounded and, according to Theorem \ref{cor33RGES} and Proposition \ref{prop42ineqfd}, the potential function grows quadratically, we can apply Lemma \ref{mainineqshr}.}
					
					
					
					Set $a=A^{-1}=(1-2(n-1)\rho)^{-1}$ and $b=0$ in (\ref{429catend}) to get
					
					\begin{equation}\label{430catend}
						\begin{split}
							2\int\limits_M|\mbox{{Ric}}|^2|\nabla u|&\leq -\int\limits_M R\left(\Delta f-3(\rho R+\lambda)\right)|\nabla u|-(1+A)\int\limits_M \mbox{{Ric}}(\nabla u, \nabla u)R|\nabla u|^{-1}.\\
						\end{split}
					\end{equation}
					On the other hand, for any constant $\gamma>0$ 
					\begin{equation*}
						\begin{split}
							-\mbox{Ric}(\nabla u, \nabla u)R|\nabla u|^{-1}=-R_{ij}u_iu_j|\nabla u|^{-1}R
							=-(R_{ij}-\gamma Rg_{ij})u_iu_j|\nabla u|^{-1}R-\gamma|\nabla u|R^2.
						\end{split}
					\end{equation*}
					The first term on the right-hand side of the equation above can be estimated as
					\begin{equation*}
						\begin{split}
							-(R_{ij}-\gamma Rg_{ij})u_iu_j|\nabla u|^{-1}R\leq&|R_{ij}-\gamma Rg_{ij}||\nabla u|R\\
							\leq& |R_{ij}-\gamma Rg_{ij}|^2|\nabla u|+\frac{1}{4}|\nabla u|R^2\\
							=&|\mbox{Ric}|^2+\left(\gamma^2n-2\gamma+\frac{1}{4}\right)|\nabla u|R^2,
						\end{split}
					\end{equation*}
					which implies
					\begin{equation}\label{4315catend}
						-\mbox{Ric}(\nabla u, \nabla u)R|\nabla u|^{-1}\leq|\mbox{Ric}|^2|\nabla u|+\left(\gamma^2n-3\gamma+\frac{1}{4}\right)|\nabla u|R^2.
					\end{equation}
					As $\gamma=\frac{3}{2n}$ is the minimum of $\gamma^2n-3\gamma+\frac{1}{4}$, (\ref{4315catend}) becomes
					\begin{equation}\label{432catend}
						-\mbox{Ric}(\nabla u, \nabla u)R|\nabla u|^{-1}\leq|\mbox{Ric}|^2|\nabla u|+\left(\frac{n-9}{4n}\right)|\nabla u|R^2.
					\end{equation}
					Plugging this into (\ref{430catend}), we get
					\begin{equation*}
						\begin{split}
							2\int\limits_M|\mbox{{Ric}}|^2|\nabla u|\leq& -\int\limits_M R\left(\Delta f-3(\rho R+\lambda)\right)|\nabla u|+(1+A)\int\limits_M|\mbox{Ric}|^2|\nabla u|\\
							&+(1+A)\left(\frac{n-9}{4n}\right)\int\limits_M|\nabla u|R^2,
						\end{split}
					\end{equation*}
					this is,
					\begin{equation*}
						\begin{split}
							(1-A)\int\limits_M|\mbox{{Ric}}|^2|\nabla u|\leq& -\int\limits_M R\left(\Delta f-3(\rho R+\lambda)\right)|\nabla u| +(1+A)\left(\frac{n-9}{4n}\right)\int\limits_M|\nabla u|R^2.
						\end{split}
					\end{equation*}
					Since $\Delta f=(n\rho-1)R+n\lambda,$ and $|\mbox{Ric}|^2\geq \frac{R^2}{n}$, the inequality above implies that
					\begin{equation*}
						\begin{split}
							0\leq& -\int\limits_M R\left((n\rho-1)R+n\lambda-3(\rho R+\lambda)\right)|\nabla u| +\left(\frac{(1+A)(n-9)}{4n}-\frac{1-A}{n}\right)\int\limits_M|\nabla u|R^2\\
							=&-(n-3)\lambda\int\limits_M R|\nabla u| +\left[\frac{(1+A)(n-9)}{4n}-\frac{1-A}{n}+3\rho-(n\rho-1)\right]\int\limits_M|\nabla u|R^2,
						\end{split}
					\end{equation*}
					which can be rewritten as
					\begin{equation*}
						\begin{split}
							(n-3)\lambda\int\limits_M R|\nabla u|\leq&\left[\frac{(1-(n-1)\rho)(n-9)}{2n}-\frac{2(n-1)\rho}{n}+(3-n)\rho+1\right]\int\limits_MR^2|\nabla u|,
						\end{split}
					\end{equation*}
					and consequently
					\begin{equation}\label{intest}
						\frac{2n(n-3)\lambda}{3(n-3)-(3n^2-12n+5)\rho}\int\limits_M R|\nabla u|\leq\int\limits_MR^2|\nabla u|.
					\end{equation}
					It is worth noticing that $3(n-3)-(3n^2-12n+5)\rho>0$ for all $n$ as $0\leq\rho<\frac{1}{2(n-1)}$. Together with the hypothesis over $R$, this implies 
					$$R=\frac{2n(n-3)\lambda}{3(n-3)-(3n^2-12n+5)\rho},$$
					and all inequalities above are in fact equalities, in particular, from equality on Young's inequality, we must have
					$$\left|\mbox{Ric}-\frac{3}{2n}Rg\right|=\frac{1}{2}R,$$
					which implies
					\begin{equation}\label{ric2eq}
						\begin{split}
							\vert Ric\vert^2
							=&\frac{n(n+3)(n-3)^2\lambda^2}{(3(n-3)-(3n^2-12n+5)\rho)^2}.
						\end{split}
					\end{equation}
					
					On the other hand, since $R$ is proven to be constant, $\Delta R=\nabla R=0$, then we have from Proposition \ref{catino} that 
					$$|\mbox{Ric}|^2=\rho R^2+\lambda R,$$
					combined with (\ref{ric2eq}) and the value of $R$, this means
					\begin{equation*}
						\begin{split}
							\frac{n(n+3)(n-3)^2\lambda^2}{(3(n-3)-(3n^2-12n+5)\rho)^2}=&\rho \left(\frac{2n(n-3)\lambda}{3(n-3)-(3n^2-12n+5)\rho}\right)^2\\
							&+\frac{2n(n-3)\lambda^2}{3(n-3)-(3n^2-12n+5)\rho}.
						\end{split}
					\end{equation*}
					Since the numerators are not null, $\lambda>0$ and $n\geq 4$, expression above can be simplified to
					$$(n-3)(n+3)=4n\rho(n-3)+6(n-3)-2(n^2-12n+5)\rho,$$
					which after a couple {of} computations leads to
					\begin{equation}\label{contrho}
						n^2-6n+9=-2\rho(n^2-6n+5).
					\end{equation}
					
					Recalling again $n\geq 4$, we can analyze (\ref{contrho}) for three cases: If $n=4$, then from hypothesis $\rho<\frac{1}{2(n-1)}=\frac{1}{6}$, but making $n=4$ in (\ref{contrho}) one gets $\rho=\frac{1}{6}$, which is impossible. In case $n=5$, a simple substitution into (\ref{contrho}) implies $4=0$, again impossible. Finally, for any $n\geq6$, both $n^2-6n+9$ and $n^2-6n+5$ are positive, an this way (\ref{contrho}) implies 
					$$\rho=-\frac{n^2-6n+5}{2(n^2-6n+9)}<0,$$ 
					contradicting once again the hypothesis of $\rho\in\left[0,\frac{1}{2(n-1)}\right)$.

					In any case, (\ref{contrho}) leads to a contradiction for every $n\geq4$. This means such $\varphi$-harmonic function $u$, satisfying (\ref{429catend}) and (\ref{430catend}), shall not exist. Thus, $M$ must have only one end, and the theorem is proven.
				\end{proof}

			\end{document}